\documentclass[a4paper, 12pt]{article}
\usepackage[margin=2.5cm, top=2.5cm, bottom=5cm, footskip=3cm]{geometry}

\usepackage{graphicx} 
\usepackage{amsmath}		
\usepackage{amsfonts}		
\usepackage{amsthm}
\usepackage[utf8]{inputenc}
\usepackage{color}

\usepackage{tikz}
\usepackage{tikz-cd} 
\usepackage{slashed}

\usetikzlibrary{calc}
\usetikzlibrary{arrows}  
\usetikzlibrary{decorations.markings}
\usetikzlibrary{shapes}

\usepackage{hyperref} 
\hypersetup{
	unicode=true,
	pdftoolbar=true,
	pdfmenubar=true,
	pdffitwindow=false,
	pdfstartview={FitH},
	pdftitle={My title},
	pdfauthor={Author},
	pdfsubject={Subject},
	pdfcreator={Creator},
	pdfproducer={Producer},
	pdfkeywords={keyword1} {key2} {key3},
	pdfnewwindow=true,
	colorlinks=true,   
	linkcolor={blue!80!black}, 
	citecolor={CrimsonRed}, 
	filecolor={blue!80!black},
	urlcolor={blue},      
	hypertexnames=false, %
} 

\definecolor{RoyalBlue}{RGB}{35,32, 57}
\definecolor{lightblue}{RGB}{132,131,143}
\definecolor{CrimsonRed}{RGB}{130,0,29}

\usepackage[figure]{hypcap}
\usepackage{amssymb}		
\usepackage{amstext}		
\usepackage{graphicx}		
\usepackage[utf8]{inputenc}	
\usepackage[T1]{fontenc}	
\usepackage{pdfpages}		
\usepackage{wrapfig}		
\usepackage{here}			
\usepackage{float}			
\usepackage{fancyhdr}
\usepackage{verbatim}		
\usepackage{wasysym}		
\usepackage{tocloft}		
\usepackage{siunitx}		
\usepackage[arrow, matrix, curve]{xy} 
\usepackage{wrapfig,tabularx,multirow,hyperref,icomma}
\usepackage{mathptmx}
\usepackage{lmodern}
\usepackage{courier}
\usepackage{paralist}
\usepackage{afterpage}
\usepackage{subcaption}
\usepackage[section]{placeins}
{\left.\begin{aligned}}
	{\end{aligned}\right\rbrace}
\usepackage{titletoc}
\usepackage{titlesec}
\graphicspath{{figuresArXiv/}} 
\usepackage{etoolbox}
\usepackage{relsize,exscale}
\usepackage[titletoc]{appendix}
\usepackage{natbib}

\newcommand\numberthis{\addtocounter{equation}{1}\tag{\theequation}}

\newcommand{\AS}[1]{#1}
\newcommand{\RCA}[1]{#1}
\newcommand{\VK}[1]{#1}

\renewcommand{\i}{i}
\newcommand{\ptl}{\partial}
\newcommand{\vph}{\varphi}

\newcommand{\be}{\begin{equation}}
	\newcommand{\ee}{\end{equation}}
\newcommand{\beq}{\begin{equation}}
	\newcommand{\eeq}{\end{equation}}

\newcommand{\Sp}{\mathcal{S}}

\newcommand{\CP}{\mathbb{C}\mathbb{P}}
\renewcommand{\S}{\mathcal{S}}

\newcommand{\tmmathbf}[1]{\ensuremath{\boldsymbol{#1}}}

\newcommand{\bz}{\boldsymbol{z}}

\renewcommand{\c}{\!\mathbb{c}}

\usepackage{cancel}

\usepackage{stmaryrd} 
\newcommand{\class}[1]{\left\llbracket #1 \right\rrbracket}

\usepackage[scr=boondoxo,bb=libus]{mathalpha}

\newtheorem{definition}{Definition}[section]

\newtheorem{proposition}[definition]{Proposition}

\newtheorem{lemma}[definition]{Lemma}

\theoremstyle{definition}
\newtheorem{remark}[definition]{Remark}

\theoremstyle{definition}

\theoremstyle{definition}

\newcommand{\z}{\boldsymbol{z}}
\newcommand{\x}{\boldsymbol{x}}

\newcommand{\uu}{\boldsymbol{u}}
\newcommand{\vv}{\boldsymbol{v}}

\newcommand{\xNP}{\x_{\textsc{np}}}
\newcommand{\xSP}{\x_{\textsc{sp}}}

\newcommand{\bdeta}{\boldsymbol{\eta}}

\usepackage[normalem]{ulem}
\usepackage{graphicx}
\renewcommand{\sout}[1]{}

\numberwithin{equation}{section}

\usepackage{authblk}

\title{Plane-wave representation for the Laplace--Beltrami equation on a sphere: \\ application to the Green's function.
}

\author[1]{Andrey V. Shanin}
\affil[1]{{\footnotesize Department of Physics (Acoustics Division), Moscow State University, Leninskie Gory,
119992 Moscow, Russia}}
\author[2]{Valentin D. Kunz}
\affil[2]{{\footnotesize Department of Mathematics, Ohio State University, 231 W 18th Ave, Columbus, OH 43210, USA}}
\author[3]{Rapha\"el C. Assier}
\affil[3]{{\footnotesize Department of Mathematics, The University of Manchester, 
Oxford Road, Manchester, M13 9PL, UK}}

\date{\today}

\begin{document}
	
\maketitle
\begin{abstract}
 We propose an extension of the plane-wave representation for wave fields defined on the real sphere $\S^2$. This representation is well-known in the planar setting but has never been developed for curved surfaces. To achieve this, we need to carefully study the geometry of the complexification of $\S^2$ and the properties of the Laplace--Beltrami operator, while using  concepts of multidimensional complex analysis. We extend the region of validity of such plane-wave representation by developing a sliding-contours method. Our methodology is illustrated through the study of the Green's function on the real sphere.
\end{abstract}

\section{Introduction}	
We consider the Laplace--Beltrami equation on the unit sphere $\mathcal{S}^2 \subset \mathbb{R}^3$. That is, an equation comprising the Laplace--Beltrami operator and  a multiplication by a constant. Such an equation is known to have wave-like solutions, and is a natural generalisation of the Helmholtz
equation on a plane.

A Laplace--Beltrami equation, besides being interesting in itself, can be important as a part of building a solution for problems of diffraction by cones  as initially explained in \citep{Smyshlyaev1993, Smyshlyaev2000} and further exploited in e.g.\  \citep{shanin2005modified, AssierPeake2012,shanin2012asymptotics, AssierPoonPeake2016,lyalinov2018acoustic,Sarigiannidis2025}.

Our aim is to develop a theoretical framework for the Laplace--Beltrami equation that would be close to that developed for the Helmholtz equation on a plane. Namely, such a framework for the Helmholtz equation uses the following concepts: 

\begin{itemize}

\item
{\em Plane waves} which are elementary ``building blocks'' for more complicated solutions. 

\item
Contour integral representations of the fields that have the structure of {\em plane-wave decompositions\/}
(see e.g.\ \citep{Babich2007}).

\item 
The {\em directivity pattern\/} of the field and its connection with the plane-wave decomposition.  

\item 
A {\em spectral ordinary differential equation\/} for the directivity pattern of the field
(see \citep{williams1982diffraction,Shanin2001,Shanin2003}). 
\end{itemize}

In the present work we address the first two points from this list; we plan to focus on the two remaining points, and to use the resulting framework for the study of  scattering processes on the sphere, in the near future. 

To achieve this, we consider the simplest non-trivial problem: the inhomogeneous Laplace--Beltrami equation on an entire sphere with a single 
point source forcing.  Our overall aim is to obtain a plane-wave representation formula for the resulting Green's function.

The structure of the paper is as follows. In Section~\ref{s:LaplaceBeltrami}, we formulate the Laplace--Beltrami equation on the real sphere $\S^2$ (of real dimension 2). We then 
define the complexification of this sphere, the complex sphere $\S^2_{\c}$ (of real dimension 4), and compactify it 
by adding to it an additional sphere (of real dimension 2) at infinity, which we call $\Xi$.  
The geometry of the complex sphere $\S^2_{\c}$ is studied by introducing two families of complex characteristic lines, and  
a complexified Laplace--Beltrami equation is introduced on $\S^2_{\c}$.

In Section~\ref{s:PlaneWaves}, we introduce the concept of plane waves on both $\S^2$ and $\S^2_{\c}$. They are defined by considering a Green's function
for which the point source is carried to infinity. These plane waves are shown to be elementary functions having branch sets within $\S^2_{\c}$. 

In Section~\ref{sec:Plane-Wave}, we obtain a plane-wave representation of the Green's function on the real sphere $\S^2$. This representation is 
first constructed in a ``safe zone'' using the standard integral formula for the Legendre functions. 
Then, this representation is continued to the whole sphere using the sliding contours concept. Finally, the 
representation is analytically continued to the complex sphere $\S^2_{\c}$ using contour integrals. 
The representation of the Green's function using the sliding contours is the main result of the current paper. 

In the appendices, we
prove the properties of the characteristic lines (Appendix~\ref{append_A}), \RCA{express the Laplace--Beltrami operator in different coordinate systems (Appendix~\ref{app:usefulformulas})}, motivate the introduction of the 
sliding contours (Appendix~\ref{app:planar}), and discuss some symmetries of the plane-wave representation for the Green's function (Appendix~\ref{app:sym}). 
\AS{
We end by explaining in elementary terms how to numerically evaluate integrals on a complex manifold in Appendix~\ref{app:int}.
}


\section{Laplace--Beltrami equation on a sphere}
\label{s:LaplaceBeltrami}

\subsection{Laplace--Beltrami equation on a real sphere. The Green's function}

Consider the real unit sphere $\S^2 \subset \mathbb{R}^3$: 
\begin{equation}
	\Sp^2
	= 
	\{ 
	\x  \in \mathbb{R}^3  \,\,  :  \,\, 
	x_1^2 + x_2^2 + x_3^2 = 1
	\},
	\label{e:02301}
\end{equation}
where we have introduced the notation $\x=(x_1,x_2,x_3)$. On $\S^2$, we introduce the standard coordinate system  $(\theta,\varphi)$  which, for $\theta \in [0,\pi]$, and $\varphi \in [0,2\pi)$, parametrises $\x$ as follows:
\begin{align}
x_1=\sin \theta \, \cos \varphi, 
\quad 
x_2=\sin \theta \, \sin \varphi , 
\quad 
x_3 = \cos \theta,
\label{Ch02.eq.01}
\end{align}
as shown in Figure \ref{f:02301}, left.

We will use both the Euclidean coordinate system in $\mathbb{R}^3$ and the $( \theta, \varphi )$ system to refer to the points of~$\S^2$.
We introduce the notations 
\[
\xNP = (0,0,1), \qquad \xSP = (0,0,-1),
\]
for the North pole and the South pole of $\S^2$, respectively. Indeed, these points correspond to $\theta = 0$ and $\theta= \pi$.

We consider a Helmholtz-type equation on $\S^2$  
with a point source forcing located at $\x= \x_0$. This equation, that we call the forced Laplace--Beltrami Equation, 
is given by 
\begin{equation}
	\left( \tilde \Delta + \lambda(\lambda+1) \right) G(\x;\x_0) = \delta(\x-\x_0),
	\label{e:02303}
\end{equation}
where $\lambda \in \mathbb{C}\setminus \mathbb{Z}$. The Laplace--Beltrami operator (LBO) $\tilde \Delta$ is defined as  
		 \begin{align}
		 	\tilde \Delta = {\rm div} \nabla, \label{eq.LB-CoordinateFree}
		 \end{align}
where ${\rm div}$ and $\nabla$ are the usual (real) divergence and gradient operators defined  on $\S^2$ relative to the metric that $\S^2$ inherits from the ambient Euclidean $\mathbb{R}^3$. As we will need it later on, we  also introduce the homogeneous Laplace--Beltrami equation 
\begin{equation}
\left( \tilde \Delta + \lambda(\lambda + 1) \right) u = 0.
\label{e:cdd002}
\end{equation}

Importantly, note that \eqref{eq.LB-CoordinateFree} defines $\tilde{\Delta}$ independently of any choice of coordinate system \cite{KobayashiNomizu}.
	
\begin{figure}[h]
	\centering
	\includegraphics[width=.75\textwidth]{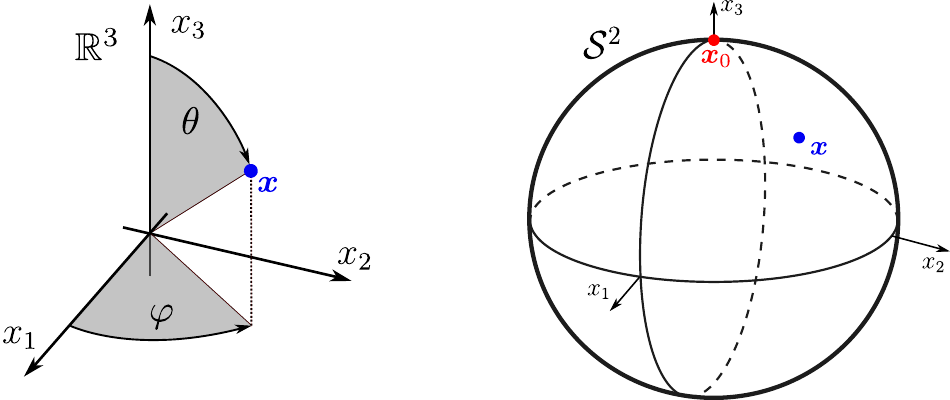}
	\caption{\small $(\theta,\varphi)$ coordinate system on $\S^2$ (left), and $\S^2 \subset \mathbb{R}^3$ with a point source located at $\x_0=\xNP$ (right).}
	\label{f:02301}
\end{figure}

In the coordinate system $(\theta, \varphi)$, the LBO $\tilde \Delta $ is written as
\begin{equation}
	\tilde \Delta  =
	\frac{\ptl^2}{\ptl \theta^2}
	+ 
	\frac{\cos \theta}{\sin \theta} \frac{\ptl}{\ptl \theta}
	+ 
	\frac{1}{\sin^2 \theta} \frac{\ptl^2 }{\ptl \vph^2} \cdot
	\label{e:02304}
\end{equation}
This expression makes sense everywhere except for $\theta = 0$ and $\theta=\pi$, but the LBO itself is well defined there and can be written explicitly by a simple rotation of the coordinate system. As it is known (see e.g.\ \cite{keller1999singularities}), the eigenvalues of the LBO $\tilde \Delta$ on $\S^2$ are of the form $-\lambda(\lambda+1)$ for $\lambda \in \mathbb{Z}$. Therefore, for $\lambda \notin \mathbb{Z}$, the operator $\tilde \Delta + \lambda(\lambda+1)$ is invertible, and
\eqref{e:02303} uniquely defines $G(\x;\x_0)$ (see e.g.\ \cite{Smyshlyaev1993}).


The solution to \eqref{e:02303} can be constructed relatively easily. Choosing $\x_0 = \xNP$ and
using the rotational invariance of \eqref{e:02303}, 
we find that $G(\x;\xNP)$ does not depend on~$\varphi$ and is therefore a function of $\theta$ only, that we  denote by $G(\theta)$ for simplicity. Thus, \eqref{e:02303} becomes 
rewritten as Legendre's differential equation 
\begin{equation}
	\left( 
	\frac{d^2}{d \theta^2} + \frac{\cos \theta}{\sin \theta}\frac{d }{d \theta} + \lambda (\lambda + 1)
	\right) G(\theta) =0 ,
	\qquad 
	0 < \theta < \pi.
	\label{e:02308}
\end{equation}

Similarly to the planar setting, it can be shown that the point-source forcing corresponds to the local singular behaviour
	\begin{equation}
	G(\theta) = \frac{1}{2\pi} \log(\theta)  + O(1) ~ \mbox{ as } ~ \theta \to 0,
	\label{e:02307}
	\end{equation}
 which supplements \eqref{e:02308}. 
Moreover, $G(\x; \xNP)$ should be regular at $\x = \xSP$ (i.e.\ $G(\theta)$ should be regular at $\theta=\pi$). 
Using this, it can be shown that the solution is given by 
\begin{equation}
			G(\x;\xNP) = \frac{1}{4 \sin (\pi \lambda)} P_{\lambda} (- \cos \theta),
			\label{e:02309}
		\end{equation}
where $P_\lambda $ denotes the \textit{Legendre function of the first kind} (see e.g.\ \cite{Smyshlyaev1993,Szmytkowski2007}), which is known to be regular at $\theta=\pi$ (see \cite{Bateman1953}).

Using the rotational invariance of $\tilde{\Delta}$,
we can generalise \eqref{e:02309} allowing $\x_0$ to be an arbitrary point of~$\S^2$:
\begin{equation}
	G(\x; \x_0) = \frac{1}{4\sin (\pi \lambda) } P_{\lambda} (- \x \cdot \x_0),
    \qquad  
    \x, \x_0 \in \S^2.
	\label{e:02310}
\end{equation}
Here, $\x \cdot \x_0$ denotes the dot-product of the vectors $\x = (x_1,x_2,x_3)$
and $\x_0 = (x_1',x_2',x_3')$ given by
\begin{equation}
\x \cdot \x_0 = x_1 x_1' + x_2 x_2' + x_3 x_3'.
\label{e:dot} 
\end{equation}

The aim of the rest of the paper is to build an analogue of a plane-wave representation for the Green's function~\eqref{e:02310}.

\begin{remark}
	The fact that \eqref{e:02303} uniquely determines the Green's function $G(\x;\x_0)$ is fundamentally different from what is required to uniquely determine Green's functions within the planar 2D-scattering setting \cite{Duffy2015} where one needs Sommerfeld's radiation condition to specify the Green's function's behaviour at infinity. 
\end{remark}

\begin{remark}
	The case of $\lambda \in \mathbb{Z}$ is {\em resonant}, and, as already mentioned, not considered throughout this article. Indeed, one cannot build a stationary Green's function for such~$\lambda$.
	Instead, there exist solutions of the homogeneous equation (\ref{e:02303}) on the whole sphere. Such solutions are expressed through Legendre's polynomials, and commonly referred to as spherical harmonics.
\end{remark}


\subsection{The complex sphere and its compactification}

We can define an affine complexification of the real sphere $\S^2$, denoted $\S^2_{\c}$, as follows: 
\begin{equation}
\S^2_{\c} = \{\z = (z_1,z_2, z_3) \in \mathbb{C}^3 \,\, : \, \,  z_1^2+z_2^2+z_3^2=1\} . 
\label{e:f001}
\end{equation}
The real sphere $\S^2$ is embedded into $\S^2_{\c}$ by taking $\z = \x$.
We will use the notation $\x$ (or $\z$) to indicate that a real (or complex) point is taken.

The complex sphere $ \S^2_{\c}$ can be described by the coordinates $( \theta , \varphi )$
(both taking complex values) almost everywhere via formulas similar to \eqref{Ch02.eq.01}: 
\begin{equation}
z_1 = \sin \theta \, \cos \varphi , 
\quad 
z_2 = \sin \theta \, \sin \varphi , 
\quad 
z_3 = \cos \theta.
\label{e:f003}
\end{equation}
This fact follows from the  Bruhat--Whitney theorem (see for instance  \cite[Chapter 5]{CieliebakEliashberg2010}).
The coordinates $(\theta, \vph)$ do not work regularly near the points~$\z = (0,0,\pm 1)$.
Neighbourhoods of these points
can be covered by, say, interchanging $z_2$ and $z_3$ in \eqref{e:f003}.

Let us now build a compactification of $\S^2_{\c}$. The most natural way to do this is to embed $\mathbb{C}^3$ into 
the projective complex space $\CP^3$ and close $\S^2_{\c}$ therein. \VK{In order to keep the present article self-contained,
we give the definition of $\CP^{n}$ below (cf. \cite{griffiths1984principles} or \cite{Shabat1991}, for instance) for any $n\in\mathbb{N}$, $n\geq1$. In the paper we will use mainly $n=2$ and $n=3$.}

\VK{Let $\uu, \vv \in \mathbb{C}^{n+1}\setminus \{\boldsymbol{0}\}$, with $\boldsymbol{0} = (0,..,0)$ denoting the zero-vector. Define the following equivalence relation $\sim$ by 
    \begin{equation}
        \uu \sim \vv, \qquad \text{iff} \qquad \uu = \lambda \vv, \quad \text{for} \quad \lambda \in \mathbb{C}\setminus \{0\}.
    \end{equation}
 The projective space $\CP^n$ is defined as the quotient space associated with this equivalence relation: 
    \begin{align}
        \CP^n = (\mathbb{C}^{n+1}\setminus \{\boldsymbol{0}\}) / \sim. 
    \end{align}
 The corresponding equivalence class of a vector $\uu=(u_0,u_1,\dots,u_n)\in\mathbb{C}^{n+1}$ will be denoted by ${\class{\uu}=\class{u_0:u_1:...:u_n}}$.  }

 \VK{$\CP^n$ is a complex manifold of dimension $n$ (see again \cite{griffiths1984principles,Shabat1991}), and one may define an atlas as follows. Let $U_j = \{\uu=(u_0,u_1,...,u_n) \in \mathbb{C}^{n+1}: ~ u_j \neq 0\}$ and let $V_j = \pi(U_j) \subset  \CP^n$ with $\pi(\uu) = \class{\uu}$ denoting the projection map $\pi: \mathbb{C}^{n+1} \to \CP^{n}$. Moreover, we let 
 \begin{align}
     \phi_j: V_j \to \mathbb{C}^n; \qquad \class{u_0:u_1:...:u_n} \mapsto \left(\frac{u_0}{u_j},..., \frac{u_{j-1}}{u_j},\frac{u_{j+1}}{u_j}, ... , \frac{u_n}{u_j} \right).
 \end{align}
 The collection of all $(V_j,\phi_j)$ forms an atlas of $\CP^n$. 
 Moreover, we will require the following \textit{embedding} of $\mathbb{C}^n$ within $\CP^n$. 
    \begin{align}
        \iota:  \mathbb{C}^n \to \CP^n; \quad \z=(z_1,...,z_n) \mapsto \class{1:z_1:...:z_n}.
    \end{align}
 Note that the embedding $\iota$ is the inverse mapping of $\phi_0$, and the inverse mappings of $\phi_j$ for $j\neq 0$ can be constructed similarly.}

 \VK{
 Finally, let us introduce the following notation. For $\mathcal{U} \subset \CP^n$, we let
    \begin{align}
        f_0(\mathcal{U}) = \phi_0(\mathcal{U} \cap V_0), \label{e:revVK2.17}
    \end{align}
and call $f_0(\mathcal{U})$ the \textit{finite part} of $\mathcal{U}$. 
}

\VK{This is all the projective-space machinery required for the present article. Using these concepts, we can now define the \textit{compactification} of $\mathcal{S}^2_{\c}$ as follows. 
    \begin{align}
        \hat{\mathcal{S}}^2_{\c} = \{\class{\uu}\in \CP^3: u_1^2 + u_2^2 + u_3^2 = u_0^2\}. \label{e:revVK2.18}
    \end{align}}
\VK{\begin{remark}
    $\hat{\mathcal{S}}^2_{\c}$ is actually the (unique up to isomorphism) quadric  surface within $\CP^3$, see \cite[Chapter 4]{griffiths1984principles}. 
\end{remark}}

\VK{Note that $\iota(\mathcal{S}^2_{\c}) \subset \hat{\mathcal{S}}^2_{\c}$ so, in this sense, we shall think of $\mathcal{S}^2_{\c}$ as being embedded within $ \hat{\mathcal{S}}^2_{\c}$. Indeed $f_0(\hat{\mathcal{S}}^2_{\c}) = \mathcal{S}^2_{\c}$, so the complexification of $\mathcal{S}^2$ is the finite part of $\hat{\mathcal{S}}^2_{\c}$.} 

\VK{Let us now discuss the infinity within $\hat{\mathcal{S}}^2_{\c}$. Within $\CP^3$, consider the so-called ``hyperplane at infinity''
    \begin{align}
        \CP^3_{\infty} = \{\class{\uu} \in \CP^3: u_0 = 0 \}. \label{e:revVK2.19}
    \end{align}
By definition of the embedding $\iota$, we have $\iota(\mathbb{C}^3) \cap \CP^3_{\infty} = \emptyset$ (equivalently, $f_0(\CP^3_{\infty}) = \emptyset$). One may, however, obtain $\CP^3_{\infty}$ as a limiting process by letting $|z_1|^2+|z_2|^2+|z_3|^2 \to \infty$ within $\iota(\mathbb{C}^3)$. The relevance of this limiting procedure for the purpose of obtaining a plane-wave analogue for the Laplace-Beltrami equation is explained in Section \ref{s:PlaneWaves}. Using the concept of $\CP^3_{\infty}$, we can now introduce the  ``sphere at infinity'' $\Xi$ as follows: 
    \begin{align}
        \Xi = \hat{\mathcal{S}}^2_{\c} \cap \CP^3_{\infty} = \{\class{0: \eta_1 : \eta_2 : \eta_3} \in \CP^3: ~ \eta_1^2+\eta_2^2+\eta_3^2=0\}. \label{e:revVK2.20}
    \end{align}
\begin{remark}\label{rem:013}
    $\Xi$ can be identified as the quadric surface within $\CP^2$. Thus, $\Xi$ is known to be biholomorphic to the Riemann sphere, see \cite[Chapter 4]{griffiths1984principles}. 
\end{remark}} 
    
 \VK{Let us discuss the properties of $\Xi$ outlined in the previous Remark \ref{rem:013} in more detail, since the identification of $\Xi$ with the Riemann sphere will play an important role throughout the remainder of this article. To this end,} introduce the set of non-zero complex vectors of ``zero length'': 
\begin{equation}
\mathcal{N} = \{ \bdeta = (\eta_1 , \eta_2, \eta_3) \in \mathbb{C}^3 \setminus \{ \boldsymbol{0} \} 
\, \, : \, \, 
\bdeta \cdot \bdeta  = 0
\} .
\label{e:f001a2}
\end{equation}
The dot product of two complex vectors is also defined according to \eqref{e:dot} and does not involve complex conjugation.

\VK{ For brevity, let us introduce the notation
    \begin{align}
         [\bdeta] \equiv \class{0:\eta_1:\eta_2:\eta_3},\quad \text{for} \quad \bdeta=(\eta_1,\eta_2,\eta_3) \in \mathcal{N}.
    \end{align}
We note that for $\bdeta\in\mathcal{N}$, $[\bdeta]$ is automatically in  $\hat{\mathcal{S}}^2_{\c}$ and we can write
    \begin{align}
        \Xi = \{[\bdeta]: \bdeta \in \mathcal{N}\}. 
    \end{align}
}
 In other words, $\Xi$ \VK{can be viewed as} the set of complex directions in the space~$\mathcal{N}$.

The set $\Xi$ can be parametrised as follows. All \VK{points of $\Xi$}, except $[(1 , \pm i , 0)]$,
can be represented as $[\bdeta(\beta)]$ with  
\begin{equation}
\bdeta (\beta)  = (\cos \beta , \sin \beta , i), 
\label{e:bdd001}
\end{equation}
i.e.\ parametrised by a complex angle~$\beta$. The values of $\beta$ belong to the strip 
${\rm Re} [\beta] \in [0, 2 \pi]$, whose sides are attached to each other (the point $\beta$ on the 
left side is attached to $\beta + 2 \pi$ on the right side), see \figurename~\ref{f:shanin_fig_03}, left. 
The classes $[(1 , \pm i , 0)]$ may be understood as being associated with the values $\beta = \pm i \infty$. 
The variable $\beta$ can therefore be seen as a local variable on~$\Xi$. 

To cover the whole of $\Xi$, one can introduce ``patches'' near $[(1, \pm i, 0)]$ with the local coordinates 
\begin{equation}
\tau_{\pm} = e^{\pm i \beta}.
\label{e:xdd001}
\end{equation}
The corresponding parametrisations of points $p\in\Xi$ are then given by 
\begin{equation}
p = \left[ \left( \frac{1 + \tau_{\pm}^2}{2} ,
\pm i \frac{1 - \tau_{\pm}^2}{2} , i \tau_{\pm}
\right) \right].
\label{e:xdd002}
\end{equation}


One can see that $\Xi$ has a complex manifold structure. In fact, topologically, this is a sphere (real dimension 2). For instance, it can be seen as the Riemann sphere by using the variable $\tau_+$ as a global variable.

Throughout the paper, 
we will mainly use the variable $\beta$ as a coordinate on the sphere $\Xi$, allowing the ``values'' 
$\beta = \pm i \infty$ (see \figurename~\ref{f:shanin_fig_03}, right). 
The values $\beta$ and $\beta + 2\pi$ yield the same point of $\Xi$. 
The points $\beta$ and 
$\bar \beta + \pi$ are, in an informal sense, assumed to be opposite on the sphere, as can be seen from Figure \ref{f:shanin_fig_03}, right.\footnote{Another rationale for referring to $\bar \beta + \pi$ as `opposite' to $\beta$ is provided in Section \ref{sec:02.3}.} Here, we used $\bar{ \ }$ to denote complex conjugation. No other geometrical property of the sphere $\Xi$ will be used. 

\begin{figure}[h]
    \centering{\includegraphics[width=0.6\textwidth]{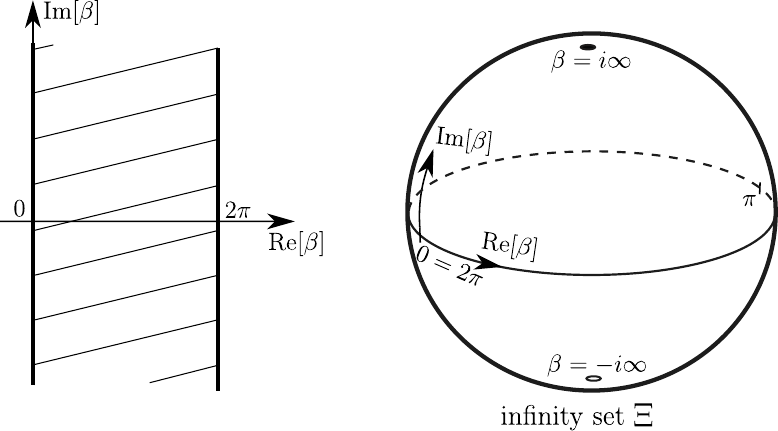}}
	\caption{$\Xi$ as a strip (left) and as a sphere (right)}
	\label{f:shanin_fig_03}
\end{figure}

\sout{We claim that $\Xi$ can be added to $\S^2_{\c}$ as a set of points at infinity, 
forming the compactified complex sphere (or the projective complex sphere) $\hat \S^2_{\c} = \S^2_{\c} \cup \Xi.$}

\renewcommand{\VK}[1]{#1}

Let the points of $\Xi$ be parametrised by (\ref{e:bdd001}) (here we do not consider neighbourhoods of the 
points $\beta = \pm i \infty$).
Let us describe a neighbourhood of $\Xi$ in~$\hat \S^2_{\c}$. To that end, we note that 
\begin{equation}
\tmmathbf{z} = \epsilon^{-1} \left( \cos \beta \, , \, \sin \beta \, , \, i \sqrt{1 - \epsilon^{2}} \right)
\label{e:bdd002}
\end{equation}
belongs to $ \S^2_{\c}$
for any small non-zero complex~$\epsilon$. 
The coordinates $(\epsilon, \beta)$
can be interpreted as local coordinates {of $\iota(\mathcal{S}^2_{\c})$} in the neighbourhood of~$\Xi$. Taking $\epsilon = 0$ in \eqref{e:bdd002} corresponds to the point 
$[(\cos \beta, \sin \beta , i)]$ of~$\Xi$. Indeed, 
    \begin{align*}
        \iota(\z) & = \class{1:\frac{\cos(\beta)}{\epsilon}: \frac{\cos(\beta)}{\epsilon}: \frac{i \sqrt{1-\epsilon^2}}{\epsilon}} \\& =  \class{\epsilon: \cos(\beta): \sin(\beta): i \sqrt{1- \epsilon^2}} \quad \ \ 
        \overset{\epsilon \to 0}{\longrightarrow} ~[(\cos(\beta), \sin(\beta), i)]. \label{e:revVK2.28}\numberthis
    \end{align*}

Near the points $[(1, \pm i, 0)]$, one can use the parametrisation 
(\ref{e:xdd002}) and the proximity variables~$\epsilon_\pm$, such that the neighbourhood of $\Xi$
is parametrised in the coordinates $(\epsilon_{\pm}, \tau_{\pm})$ according to  
\VK{\begin{equation}
\iota(\z) = \class{1:\frac{1 + \tau_{\pm}^2 + \epsilon^2_\pm}{2\epsilon_{\pm}}:
\pm i \frac{1 - \tau_{\pm}^2 -\epsilon_\pm^2}{2\epsilon_{\pm}}: i \frac{\tau_{\pm}}{\epsilon_{\pm}}}.
\label{e:xdd003}
\end{equation}}

\begin{remark} \label{rem:complexvsrealmanifoldSvsXi}
There are two important 2D spheres embedded in $\hat \S^2_{\c}$: 
they are the real sphere $\S^2$ 
and the sphere at infinity~$\Xi$.
They have different properties: $\Xi$ is a complex submanifold of $\hat \S^2_{\c}$, 
while \VK{$\iota(\S^2)$} is not; the latter is a real analytic submanifold. 
These properties can be used as follows. On the one hand, one can continue analytically solutions to the Laplace-Beltrami equation from $\S^2$ to \VK{$\S^2_{\c}$}. On the other hand, due to its complex analytic structure, $\Xi$ will serve as `dispersion surface' for the Laplace-Beltrami equation. Indeed, as we will see in  Sections \ref{s:PlaneWaves} and  \ref{sec:Plane-Wave}, the definition of a plane-wave representation as well as the aforementioned  sliding contours procedure will heavily rely on the complex structure of $\Xi$. This is due to the fact that the contours used within the plane-wave representation are fully restricted onto $\Xi$.
\end{remark}


\subsection{Characteristic lines on the complex sphere}\label{sec:02.3}

\RCA{Here we describe important properties related to the geometry of $\S_{\c}^2$ and $\hat{\S}_{\c}^2$.
A complex line in $\mathbb{C}^3$ passing through a point $\z'$ with direction $\bdeta$, defined as 
\begin{equation}
 \VK{L\left(\z^{\prime}\!; \bdeta\right)=}\left \{\z \in \mathbb{C}^3 : \z = \z' + c \bdeta \text{ for some }
c \in \mathbb{C}\right\}, \label{e:bdd004}
\end{equation}
can lie entirely within $\S_{\c}^2$. In fact, as shown in Appendix \ref{append_A}, the necessary and sufficient condition for this to be the case is to have $\z'\in\S_{\c}^2$, $\bdeta\in\mathcal{N}$, and $\z'\cdot\bdeta=0$. From now on, when writing $L\left(\z^{\prime}\!; \bdeta\right)$ we will always implicitly assume that these conditions are satisfied.
}
The situation resembles a well-known structure of a one-sheeted hyperboloid in 
$\mathbb{R}^3$ formed by two families of real lines (see e.g.\ \cite[p11]{hilbert2021geometry}).

\RCA{Similarly, for any $\class{\uu}\in\CP^3$ and any $\class{\vv}\in\CP^3$ we can define the set
\begin{equation}
    \hat L(\class{\uu};\class{\vv})=\left\{\class{\boldsymbol{w}}\in\CP^3\,:\, \boldsymbol{w}=\uu+c \vv \text{ for some }c\in\mathbb{C}\right\}.
    \label{e:revVK2.31}
\end{equation}
This object lies entirely within $\hat{\S}_{\c}^2$ if and only if $\class{\uu}\in\hat{\S}_{\c}^2$, $\class{\vv}\in\hat{\S}_{\c}^2$, and $u_1v_1+u_2v_2+u_3v_3=u_0v_0$. As before, from now on, when writing $\hat L(\class{\uu};\class{\vv})$ we will always implicitly assume that these conditions are satisfied.

These two sets are linked in the following sense: $L\left(\z^{\prime}\!; \bdeta\right)$ is the finite part of $\hat L(\iota(\z');[\bdeta])$, i.e.\ we can write $L\left(\z^{\prime}\!; \bdeta\right)=f_0(\hat L(\iota(\z');[\bdeta]))$. This can be see directly from the definitions of $f_0$, $\iota$ and $[\bdeta]$.}

For reasons that will become clear in Section \ref{sec:CLBE}, 
a complex line $L\left(\z^{\prime}\!;\bdeta \right)$ fully belonging to $\S^2_{\c}$ will be referred to as a {\em characteristic line} 
(sometimes these lines are also referred to as generators of $\S^2_{\c}$).  
\VK{For brevity, we will also refer to the $\hat{L}(\class{\uu};\class{\vv})$ that are fully contained within $\hat{\S}^2_{\c}$ as characteristic lines.}

Several statements related to characteristic lines are important for this work.
These properties are proven in Appendix~\ref{append_A} and stated below.

\begin{lemma} \label{lem:charac} The following statements are true:
\begin{itemize}
\item[a)]
For each $\z \in \S^2_{\c}$, there are exactly two characteristic lines \VK{$L$ (defined according to \eqref{e:bdd004})} passing through~$\z$. 
\item[b)]
For each \VK{$p \in \Xi$} 
there are exactly two characteristic lines \VK{$\hat{L}$ (defined according to \eqref{e:revVK2.31})} passing through\VK{~$p$}. 

\item[c)] 
The characteristic lines \VK{$\hat L$}
form two connected sets in\VK{~$\hat{\S}_{\c}^2$}. 
A characteristic line passing through a point $q\in\hat \S^2_{\c}$ 
\sout{($P$ can either be some $\tmmathbf{z} \in \S^2_{\c}$ or some $p=[\bdeta] \in \Xi$)}
will be denoted either \VK{$\hat{L}_+(q)$ or $\hat{L}_-(q)$}, depending on the set it belongs to. \VK{If $q\in\hat{\S}_{\c}^2 \setminus \Xi$, the finite part of $\hat{L}_{\pm}(q)$ will be denoted by $L_{\pm}(\z)$ for $\z=\phi_0(q)$.}
The sets \VK{$\hat{L}_{\pm} (q)$} depend continuously on $q$, \VK{and the same statement is true for $L_{\pm}(\z)$.}

\item[d)]
Each characteristic line \VK{$\hat{L}$} 
intersects \VK{$\iota(\S^2)$} at a single point, and it crosses the sphere at infinity $\Xi$ at a single point.

\item[e)] 
Let us consider $\z''\in \S^2_{\c}$. One can show that 
\begin{equation}
L_+ ( \z'' ) \cup L_- ( \z'' ) = 
\{ \z \in \S^2_{\c} 
\, \, : \, \,
\z \cdot \z'' = 1
\} ,
\label{e:f006c}
\end{equation}
i.e.\ the intersection of the sphere $\S^2_{\c}$ and the complex hyperplane defined by $\z \cdot \z'' = 1$
is the union of two characteristic lines.

\item[f)] 
The characteristic lines are rotationally invariant in the following sense. Consider $\z,\z' \in \S^2_{\c}$. If $A \in \mathrm{SO}_{\c}(3) = \{A \in \mathrm{GL}_3(\mathbb{C}): ~ AA^T=1,~\det(A)=1\}$ is a complexified Euler-rotation such that $A\z = \z'$, then $L_{+}(\z') = AL_+(\z)$ and $L_{-}(\z') = AL_-(\z)$.

\end{itemize}

\end{lemma}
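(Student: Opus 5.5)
The plan is to work entirely in homogeneous coordinates on the quadric $\hat\S^2_{\c}=\{u_1^2+u_2^2+u_3^2=u_0^2\}$, where all the statements become linear algebra. The quadratic form $Q(\uu)=u_1^2+u_2^2+u_3^2-u_0^2$ is nondegenerate on $\mathbb{C}^4$. A projective line $\hat L(\class{\uu};\class{\vv})$ lies in $\hat\S^2_{\c}$ iff
\[
Q(\uu+c\vv)=Q(\uu)+2cB(\uu,\vv)+c^2Q(\vv)
\]
vanishes identically in $c$, with $B$ the associated bilinear form; this gives the stated criterion. For the affine version one additionally notes that $L(\z';\bdeta)$ is the finite part of $\hat L(\iota(\z');[\bdeta])$, so the criterion reduces to $\z'\in\S^2_{\c}$, $\bdeta\cdot\bdeta=0$, $\z'\cdot\bdeta=0$.

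\textbf{Parts (a), (b), (e).} Fix $q=\class{\uu}\in\hat\S^2_{\c}$. Lines through $q$ inside the quadric correspond to directions $\class{\vv}$ with $B(\uu,\vv)=0$ and $Q(\vv)=0$. The first condition defines the tangent plane $T_q$, a projective plane, and $\hat\S^2_{\c}\cap T_q$ is a conic in $T_q$. This conic is singular at $q$ and, since $Q$ is nondegenerate, it has rank exactly $2$. Hence it is a pair of distinct lines through $q$, which gives (a) and (b) uniformly for finite and infinite points.

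Concretely, for finite $\z''$ one chooses, using $\mathrm{SO}_{\c}(3)$-transitivity on $\S^2_{\c}$, the representative $\z''=(0,0,1)$. Then $\bdeta\cdot\z''=0$ and $\bdeta\cdot\bdeta=0$ force $\bdeta\propto(1,\pm i,0)$. Moreover, $\{\z\cdot\z''=1\}\cap\S^2_{\c}=\{z_3=1,\ z_1^2+z_2^2=0\}$, which is exactly the two lines $z_1=\pm i z_2$, $z_3=1$. This is (e). Since both sides of (e) are equivariant under $A\in\mathrm{SO}_{\c}(3)$ (dot products are preserved), the general case follows.

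\textbf{Parts (c), (f).} I would use the classical identification $\hat\S^2_{\c}\cong\CP^1\times\CP^1$ via the Segre map. After the linear change of variables $u_0+u_3,\ u_0-u_3,\ u_1+iu_2,\ u_1-iu_2$, the quadric becomes $XY=ZW$. Points are then $\class{s_0t_0:s_0t_1:s_1t_0:s_1t_1}$, and the two rulings are $\{s=\text{const}\}$ and $\{t=\text{const}\}$. This gives two families, each parametrised by a $\CP^1$ and hence connected. Through each point passes exactly one line of each family, depending continuously (indeed holomorphically) on the point; this labels $\hat L_\pm(q)$ consistently. The finite parts $L_\pm(\z)$ inherit continuity.

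For (f), $\mathrm{SO}_{\c}(3)$ acts on the quadric and permutes lines. Because it is connected and the two rulings are the two connected components of the space of lines, each ruling is preserved. An element of $\mathrm{O}_{\c}(3)$ with determinant $-1$ would swap them; identifying which ruling is preserved with the determinant sign is the point to check, e.g.\ by exhibiting a reflection that interchanges $(1,i,0)$ and $(1,-i,0)$ at $\z=(0,0,1)$.

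\textbf{Part (d).} For the intersection with $\Xi=\{u_0=0\}$: a line meets the hyperplane $u_0=0$ in exactly one point unless it lies inside it. It cannot lie inside, because $\Xi$ is a smooth conic in the plane $u_0=0$ and contains no line. For the intersection with $\iota(\S^2)$, I use the parametrisation $L(\z';\bdeta)$ and write $\z'+c\bdeta=\x$ real. Writing $\bdeta=\mathbf a+i\mathbf b$ with $\mathbf a\cdot\mathbf a=\mathbf b\cdot\mathbf b$ and $\mathbf a\cdot\mathbf b=0$, the real vectors $\mathbf a,\mathbf b$ are linearly independent. The imaginary part condition $\mathrm{Im}(\z'+c\bdeta)=0$ is then three real linear equations in the two real unknowns $\mathrm{Re}\,c$, $\mathrm{Im}\,c$. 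Existence and uniqueness follow from: at most one solution since $\mathbf a,\mathbf b$ are independent (the map $c\mapsto\mathrm{Im}(c\bdeta)$ is injective); and at least one since $\mathrm{Im}\,\z'$ lies in $\mathrm{span}(\mathbf a,\mathbf b)$.

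\textbf{Main obstacle.} This last span inclusion is the step I expect to be hardest. I would derive it from $\z'\cdot\z'=1$ and $\z'\cdot\bdeta=0$. Alternatively, I would argue via rotation to a normal form: the real rotation group together with the ruling structure lets one reduce to $\bdeta=(1,\pm i,0)$, where the lines $z_3=t$, $z_1=\pm iz_2$-type equations can be checked directly for a unique real point. I would also handle lines entirely at infinity separately; they do not exist by the argument above.
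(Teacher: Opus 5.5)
Your proof is correct, but it takes a genuinely different route from the paper's.

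\textbf{The paper's route.} The paper argues by explicit computation in coordinates.
\begin{itemize}
\item Solving $\bdeta\cdot\bdeta=0$, $\bdeta\cdot\z'=0$ in the $(\theta,\varphi)$ chart gives the two directions $\bdeta_\pm(\z')$, and a) and f) are read off from that formula.
\item Solving $\z'\cdot\z'=1$, $\z'\cdot\bdeta(\beta)=0$ gives the lines $\x_\pm(\bdeta)+\alpha\bdeta$ with explicit real points $\x_\pm$, which yields b) and d).
\item c) comes from the bijectivity of $\Psi_\pm:\Xi\to\S^2$.
\item e) uses the identities $\z''\cdot(\z-\z'')=0$ and $(\z-\z'')\cdot(\z-\z'')=0$ on $\{\z\cdot\z''=1\}$.
\end{itemize}

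\textbf{Your route.} You use the projective geometry of a smooth quadric surface.
\begin{itemize}
\item The rank-two tangent conic treats finite and infinite points uniformly.
\item The Segre identification $\hat{\S}^2_{\c}\cong\CP^1\times\CP^1$ gives the two rulings, their connectedness and the continuous labelling all at once.
\item f) reduces to the connected group $\mathrm{SO}_{\c}(3)$ preserving each of the two components of the space of lines. Connectedness alone settles f), so your reflection check is not needed for it.
\end{itemize}

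\textbf{What each buys.} Your approach explains structurally why there are exactly two families, and it proves the uniqueness half of d), which the paper leaves implicit. The cost is heavier background: the Segre embedding and the space of lines, connectedness of $\mathrm{SO}_{\c}(3)$, and, for e), transitivity of $\mathrm{SO}_{\c}(3)$ on $\S^2_{\c}$. The paper's direct identity avoids that last fact. Your approach also does not by itself produce the explicit maps $\Phi_\pm,\Psi_\pm$ that the paper needs later for the sliding contours.

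\textbf{The step you flag as the main obstacle is immediate.} Write $\bdeta=\mathbf a+i\mathbf b$ and let $\mathbf n$ be the real unit vector along $\mathbf a\times\mathbf b$. Expand $\z'$ in the basis $\{\bdeta,\bar{\bdeta},\mathbf n\}$.
\begin{itemize}
\item We have $\bdeta\cdot\bdeta=0$, $\bdeta\cdot\bar{\bdeta}=2|\mathbf a|^2\neq0$, and $\mathbf n$ is orthogonal to both $\bdeta$ and $\bar{\bdeta}$.
\item So $\z'\cdot\bdeta=0$ kills the $\bar{\bdeta}$-component.
\item Then $\z'\cdot\z'=1$ forces $\z'=\alpha\bdeta\pm\mathbf n$.
\end{itemize}
Hence the real point on the line is $\pm\mathbf n$, which is exactly the paper's $\x_\pm(\bdeta)$.

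\textbf{A harmless slip.} For the monomials $s_0t_0,s_0t_1,s_1t_0,s_1t_1$ to satisfy $XY=ZW$, the new coordinates must be listed in the order $(u_0+u_3,\,u_1+iu_2,\,u_1-iu_2,\,u_0-u_3)$.
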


A comment should be made about the property~c). Let us specify the families $L_\pm$. 
\VK{This can be done by defining $L_\pm (\z)$ at one particular point~$\z\in \S^2_{\c}$ as follows. }
Let us consider
\[
\z =\xNP=(0 , 0 , 1) \in \S^2,
\]
and define 
\begin{equation}
L_\pm (\z)  = \{  
(0,0,1) + c (\pm 1 , i,  0) 
\, \, : \, \, 
c \in \mathbb{C}
\}.
\label{e:f006c1}
\end{equation}
This defines all characteristics $L_{\pm}(\z)$ by the continuity argument given in property c). \VK{Using the property b) and the fact that $L_{\pm}(\z)$ is the finite part of $\hat{L}_{\pm}(\iota(\z))$, the characteristics $\hat{L}_{\pm}$ are also fully specified by \eqref{e:f006c1}.} An explicit way to obtain the families $L_{\pm}(\z')$ with $\z' \neq \xNP$ is as follows. By property f), we have $L_{\pm}(\z') = A L_{\pm}(\xNP)$ for any complexified Euler-rotation $A$ with $A \xNP = \z'$.\footnote{It can be shown that such rotation always exists, but this does not matter for the purpose of the present article. Hence, we will not discuss it further.}

Take a point $q \in \hat{\S}^2_{\c}$. It follows from the listed properties, that the points of either 
$\Xi$ or $\S^2$ 
can be used to parametrise \VK{the two characteristic lines $\hat L_+(q)$ and $\hat L_-(q)$.}  
These lines cross \VK{$\iota(\S^2)$} at one point each; denote these points \VK{by $\hat{\Psi}_+ (q)$ and $\hat{\Psi}_- (q)$}, 
respectively. 
Similarly, these lines also cross $\Xi$ at one point each; denote these points by
\VK{$\hat{\Phi}_+(q)$ and $\hat{\Phi}_-(q)$}, respectively.  
    \VK{
       In particular, for $p \in \Xi$ and $\x \in \mathcal{S}^2$, we thus define the maps $\Psi_{\pm}(p)$ and $\Phi_{\pm}(\x)$ by 
        \begin{align}
            \Psi_{\pm}(p) = \phi_0(\hat{\Psi}_{\pm}(p))\in \mathcal{S}^2 ~ \text{ and } ~ \Phi_{\pm}(\x) = \hat{\Phi}_{\pm}(\iota(\x)) \in \Xi
        \end{align}
    mapping $\Xi$ onto $\mathcal{S}^2$ and vice-versa.
    }
The maps $\Psi_\pm$, $\Phi_\pm$ are diffeomorphisms. Indeed, 
$\Psi_\pm$ and $\Phi_\pm$ are inverse to each other. 
Explicit formulae for these maps are given by \eqref{e:ap006},
\eqref{e:0229}, \eqref{e:0230} in Appendix~\ref{append_A}.

Next, we introduce the notations
\begin{equation}
\Phi (\x) = \{ \Phi_+ (\x), \Phi_- (\x) \}, 
\label{e:map_Phi}
\end{equation}
\begin{equation}
\Psi(p) = \{ \Psi_+ (p), \Psi_- (p) \}.
\label{e:map_Psi}
\end{equation}
So, $\Psi$ can be interpreted as a function mapping $p=[\bdeta]$ onto the two points of $\S^2$ for which $\x \cdot \bdeta = 0$. Similarly, 
$\Phi$ can be interpreted as function mapping $\x$ to the two points $p = [\bdeta]$ for which  $\x \cdot \bdeta = 0$. For simplicity, we will treat $\Phi$ and $\Psi$ as such maps. They are visualised in Figure \ref{f:02302}.

It is reasonably straightforward to show that there exists a pair-to-pair correspondence, i.e.\ 
if $\x$ and $-\x$ are opposite points of $\S^2$, 
\[
-\x \stackrel{\Phi} \longmapsto \{ \Phi_- (\x) , \Phi_+ (\x) \},
\]
so a pair of points $\{ \x, -\x \}$ is mapped to a pair of points $ \{ \Phi_+ (\x), \Phi_- (\x) \}$ (see \figurename~\ref{f:02302}). 

\begin{figure}[h]
    \centering{\includegraphics[width=0.7\textwidth]{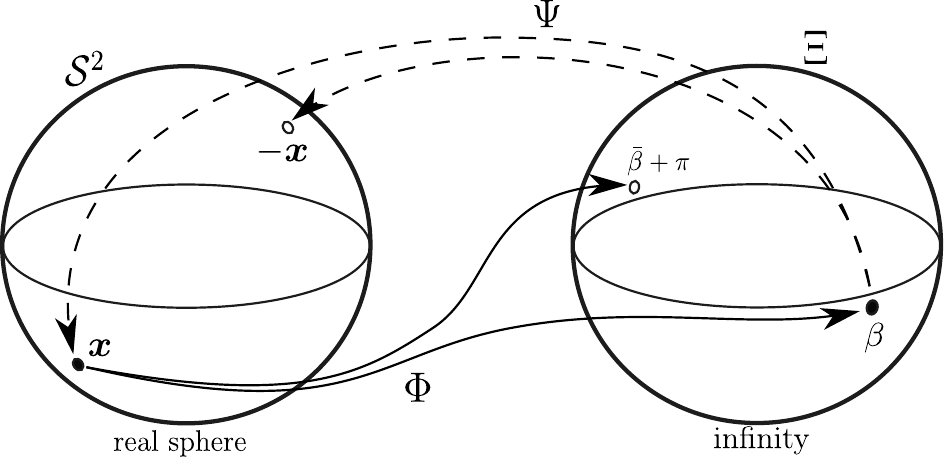}}
	\caption{Illustration of the Maps $\Psi$ and $\Phi$}
	\label{f:02302}
\end{figure}

Let 
\[
\Phi_+ (\x) = [\bdeta (\beta)]  . 
\]
Then, as it follows from \eqref{e:ap006}, or can be checked directly, 
\[
\Phi_- (\x) = [\bdeta (\bar \beta + \pi)]  . 
\]
This means that the map $\Phi$  maps a pair of opposite points on $S^2$ to a pair of opposite points on~$\Xi$.
A similar statement is valid for~$\Psi$.

\begin{remark}
There are two important points $\x\in\S^2$, for which we would like to write down $\Phi(\x)$ explicitly: 
\begin{alignat}{2}
&\xNP \stackrel{\Phi_+}{\longmapsto}  \beta = \i \infty,
\qquad 
&&\xNP \stackrel{\Phi_-}{\longmapsto}  \beta = -\i \infty,
\label{e:ddd002} \\
&\xSP \stackrel{\Phi_+}{\longmapsto}  \beta = -\i \infty,
\qquad 
&&\xSP \stackrel{\Phi_-}{\longmapsto}  \beta = \i \infty.
\label{e:ddd003}
\end{alignat}
This follows from \eqref{e:f006c1} or from \eqref{e:ap006}. 
\end{remark}

    
\subsection{Complexification of the Laplace--Beltrami equation} \label{sec:CLBE}

Let $U$ be an open domain in $\S^2_{\c} \setminus \{ \xNP , \xSP \}$.
Use the complex coordinates \eqref{e:f003} in~$U$. 
A function $u(\z)$, $\z \in U$ is a solution of the complexified Laplace--Beltrami equation
(CLBE) if $u(\theta , \varphi)$ is a holomorphic function of $(\theta, \varphi)$ in $U$, and 
it obeys there the equation having the same form as \eqref{e:02304}:
\begin{equation}
\left( 
\tilde \Delta 
 + \lambda (\lambda + 1)
\right) u (\theta , \varphi) = 0,
\qquad 
\tilde \Delta = 
\frac{\ptl^2 }{\ptl \theta^2} + \frac{\cos \theta}{\sin \theta} \frac{\ptl }{\ptl \theta}+ 
\frac{1}{\sin^2 \theta} \frac{\ptl^2 }{\ptl \varphi^2}.
\label{e:f007}
\end{equation}
Note that the CLBE is homogeneous, and the derivatives are understood in the complex holomorphic sense. In this case, we call $\tilde \Delta$ the complexified Laplace--Beltrami operator (CLBO). As explained in Appendix \ref{app:usefulformulas}, in which we write the CLBO in various coordinate systems, the CLBO cannot be  defined on $\Xi$, so we will only consider it on $\S^2_{\c}$.

The CLBE is a linear second-order PDE, and thus one can define complex characteristic sets in some standard way. One can show that the characteristic lines introduced \VK{in \eqref{e:bdd004}} are exactly the characteristic sets of the CLBE. For example,
using the representation \eqref{e:f008} it is
reasonably straightforward to show, particularly, that the sets 
$L_\pm (\z)$ are characteristics. 
This was of course the reason to call $L_\pm (\z)$ characteristic lines in the first place. \VK{The sets $\hat{L}_{\pm}(q)$ should be viewed as compactified characteristic lines. Indeed, it can be seen that $L_\pm (\z)$ is biholomorphic to $\mathbb{C}$ whereas $\hat{L}_{\pm}(q)$ is biholomorphic to the Riemann sphere $\mathbb{C} \cup \{\infty\}$.}
 
\begin{remark}\label{rem:1.6}
   In analogy with what is generally observed throughout the study of holomorphic PDEs (cf.\! \cite{Shapiro1992TheSF}, \cite{Ebenfelt1994}, or \cite{KhavinsonLundberg2018}), we expect singularities to propagate along the characteristics lines. By this we mean the following. Given a function $f(\x)$ that solves the real Laplace--Beltrami equation and has some singularities on $\S^2$, we expect the singularities of the analytical continuation $f(\z)$ to be located along the characteristic lines emanating from the real singular points of $f(\x)$. 
\end{remark}

An important example of a solution of the CLBE is a complexification of the Green's function (\ref{e:02310}). Namely, take a fixed  
$\z_0 \in \S^2_{\c}$. Then, one can directly check that
\begin{equation}
u(\z) = G(\z; \z_0) = \frac{1}{4 \sin (\pi \lambda)} P_{\lambda} (-\z \cdot \z_0), 
\qquad
\z \in \S^2_{\c}
\label{e:f010}
\end{equation}
is a solution of the CLBE on $\S^2_{\c} \setminus (T(\z_0)\cup T(-\z_0))$, where 
\[
T(\z_0) = \{ \z \in \S^2_{\c} \, \, : \, \, \z \cdot \z_0 =  1\} .
\]
In \eqref{e:f010}, $P_{\lambda}$ is understood as an analytical continuation 
of the Legendre function to $\mathbb{C} \setminus \{ 1,-1 \}$.
According to the property e) of Lemma \ref{lem:charac}, $T(\z_0)\cup T(-\z_0)$ is the union of four characteristic lines: 
\[
T(\z_0) \cup T(-\z_0) = L_+ ( \z_0 ) 
\cup 
L_- (\z_0 ) 
\cup 
L_+ ( -\z_0 ) 
\cup 
L_- ( -\z_0 ).  
\]
The function $u(\z)$ is branching (i.e.\ is multivalued) on~$\S^2_{\c} \setminus (T(\z_0)\cup T(-\z_0))$. 

\begin{remark}
    Let $\z_0 = \x_0 \in \S^2$. Then the function defined by (\ref{e:f010}) is an analytic   continuation of \eqref{e:02310}.
	Generally, some care needs to be taken when analytically continuing in several complex variables since the analytical continuation off a surface is generally not uniquely determined (take $f(z_1,z_2)=z_1z_2$ which is constantly zero on the plane $z_2 \equiv 0$, but not constantly zero within $\mathbb{C}^2$). However, the analytical continuation off a {\em real} analytic surface (like $\S^2$) within its complexification (like $\S^2_{\c}$) is uniquely determined \cite{CieliebakEliashberg2010}. 
    
    In other words, it will be important below that $\S^2$ and $\Xi$ 
    play a different role as submanifolds of $\hat \S^2_{\c}$: $\S^2$ is a real analytic submanifold (i.e.\ not a complex submanifold), 
    and $\Xi$ is a complex submanifold, as already mentioned in Remark \ref{rem:complexvsrealmanifoldSvsXi}. 
    
\end{remark}

\begin{remark}
For more on the importance of complexified PDEs, we refer to \cite{AssierShanin2021AnalyticalCont}.
\end{remark}

\section{A plane wave analogue for the sphere}\label{s:PlaneWaves}

\subsection{Asymptotics of Green's function }\label{sec:GreensAsympt}

To obtain an object that can play the role of a plane wave on $\S^2$, we mimic the procedure known for the planar case: we take a Green's function $G(\z;\z_0)$ and
carry the ``point source'' $\z_0$ to infinity (up to an adequate scaling) \sout{within $\hat \S^2_{\c}$}. The resulting function in the finite part of the space is then declared to be a plane wave coming from the corresponding direction.

As the Green's function, we take the solution $G(\z ; \z_0)$ defined by 
(\ref{e:f010}). \VK{The idea is to ``carry the} source 
\VK{$\z_0$ to infinity'', i.e. to consider a limiting process in such a way that $\iota (\z_0)$ moves to some 
$[\bdeta(\beta)] \in \Xi$} \VK{for}
\begin{equation}
\bdeta(\beta) = (\cos \beta\, , \, \sin \beta \, , \,i) \in \mathcal{N}.
 \label{e:0211a}
\end{equation}
I.e. \VK{$\iota(\z_0)$} approaches the infinity of~$\hat \S^2_{\c}$. The 
point $[\bdeta(\beta)]$ plays the role of the direction of incidence.  
As we noted, $G (\z; \z_0)$ solves the CLBE almost everywhere and, in particular, 
its restriction to $\z  \in \S^2$ solves the real Laplace--Beltrami equation almost everywhere. 
 
Take $\z_0$ parameterised according to \eqref{e:bdd002}:
\begin{equation}
\z_0  = \epsilon^{-1} \left( \cos \beta \, , \, \sin \beta \, , \, i \sqrt{1 -\epsilon^2} \right).
 \label{e:0211}
\end{equation}
\VK{So, informally, we may think of $\z_0$ as being located close to $[\bdeta(\beta)]$.}
The process \VK{$\iota(\z_0)  \to [\bdeta(\beta)]$} is then described by taking the limit $\epsilon \to 0$, \VK{see \eqref{e:revVK2.28}.} 

Take into account the series representation of the Legendre
function $P_\lambda(z)$ near its regular singular point $z = \infty$, valid for $|z|>1$ in the cut plane $\mathbb{C}\setminus(-\infty,0)$, and given by

\begin{align}
	P_{\lambda} (z)  = & A(\lambda) z^{- \lambda - 1} 
	\sum_{n = 0}^{\infty}  a_n(\lambda) z^{- 2n}  
	+ A(-\lambda-1)z^{\lambda} 
	\sum_{n = 0}^{\infty} a_n(-\lambda-1) z^{- 2 n}, 	\label{e:02313}
\end{align} 
where the coefficient functions $A$ and $a_n$ are defined by
\begin{alignat*}{2}
	A(\lambda) &= \frac{2^{- \lambda - 1} \Gamma \left( - \frac{1}{2} -\lambda
		\right)}{\sqrt{\pi} \Gamma (- \lambda)}, \qquad &&a_n(\lambda) =\frac{2^{- 2 n} \Gamma \left(\lambda + \frac{3}{2} \right) \Gamma (2 n + \lambda +
		1)}{\Gamma (\lambda + 1) \Gamma \left( n + \lambda + \frac{3}{2} \right) n!} \cdot 	
\end{alignat*}
This formula can be obtained directly from \cite[Chapter 3]{Bateman1953}.	

Take some $\x \in \S^2$.
Consider the two series in \eqref{e:02313} separately, take the limit $\epsilon \to 0$, and take the leading term in $\epsilon$ in each series. 
This results in
\begin{equation}
	G(\x;\z_0) \sim 
	\frac{1}{4\sin (\pi \lambda)} \left(  
	A_1(\lambda) \epsilon^{1+\lambda} (- \x \cdot \boldsymbol{\eta}(\beta))^{-(1+ \lambda)}
	+ 
	A_1(-\lambda-1) \epsilon^{-\lambda} (-  \x \cdot \boldsymbol{\eta}(\beta) )^{\lambda}
	\right).
	\label{e:02314}
\end{equation}

Omitting the coefficients not depending on $\x$, and bearing in mind the analogy with planar 2D scattering problems, we come to the conclusion that there should be {\em two types\/} of ``plane waves'' obtained from our limiting procedure, namely,
\begin{equation}
	w_1(\x , \beta) =(- \x \cdot \boldsymbol{\eta}(\beta))^{-(1+ \lambda)}
	~ \text{ and } ~
	w_2(\x, \beta) = (- \x \cdot \boldsymbol{\eta}(\beta))^{\lambda}.
	\label{e:02315}
\end{equation}

In the next subsection we will prove that
$w_{1,2}$ (as functions of $\x$) obey the real Laplace--Beltrami equation on $\S^2$ everywhere except for the points 
where 
\begin{equation}
\x \cdot \bdeta (\beta)  = 0.
\label{e:cdd001}
\end{equation}

Note that the functions \eqref{e:02315} depend on our parameterisation of~$\Xi$. 
Namely, \eqref{e:02315} depends on the 
particular vector $\bdeta (\beta) \in \mathcal{N}$, 
while we would like it to depend only on  $[\bdeta]$.
Let us alter the definition \eqref{e:02315} by choosing an appropriate amplitude factor.
    
\begin{definition}[Plane waves]\label{def:planewaves}
	Let $p \in \Xi$ be represented by 
    $p = [\bdeta]$
    for some $\bdeta \in \mathcal{N}$.
    A plane wave for scattering problems to the Laplace--Beltrami equation on $\S^2$ is given by either of the two functions,
	\begin{align}
		w_{1}(\x , p ; \x^{\rm{ref}}) = \left(- i \frac{\x \cdot \bdeta}{\x^{\rm{ref}} \cdot \bdeta }\right)^{-(1+\lambda)}\!\!\! ,
        \qquad  	
        w_{2}(\x , p; \x^{\rm{ref}}) = \left(- i \frac{\x \cdot \bdeta}{\x^{\rm{ref}} \cdot \bdeta }\right)^{\lambda}\!\!\!\!\!, \label{e:0223}
	\end{align}
	where  $\x, \x^{\rm{ref}} \in \S^2$, and $\x^{\rm{ref}}$ is a fixed reference point.

    We choose the branch of these plane waves according to 
	\begin{align}
		w_{1}( \x^{\rm{ref}} , p; \x^{\rm{ref}}) = e^{i \pi (\lambda+1) / 2},
        \qquad 
        w_{2}( \x^{\rm{ref}} , p; \x^{\rm{ref}}) = e^{- i \pi \lambda / 2}. 
        \label{e:0224}
	\end{align}
\end{definition}
    
    The point $p \in \Xi$ is referred to as the wave-vector of the plane wave, and $\Xi$ can be interpreted as the dispersion diagram
    (since it is a surface of all possible wave-vectors). 
    The point $\x \in \S^2$ is referred to as an observation point. Since there are two types of plane waves, the dispersion diagram can also be seen as two copies of $\Xi$.

Note  that the functions $w_j(\x,\beta),~j=1,2,$ given in \eqref{e:02315} satisfy $w_j(\x,\beta)=w_j(\x,[\boldsymbol{\eta}(\beta)]; \xNP)$, since we have
        \begin{align}
            -\x \cdot \boldsymbol{\eta} (\beta)  =  - i \frac{\x \cdot \boldsymbol{\eta} (\beta)}{\x^{\mathrm{ref}} \cdot \boldsymbol{\eta} (\beta)}, ~ \text{ with } ~ \x^{\mathrm{ref}} = \xNP. \label{e:0240}
        \end{align}

	When it is clear which reference point is chosen, or whenever the specific choice does not matter, we shall simply write $w_{1,2}(\x, p)$ instead of $w_{1,2}(\x, p; \x^{\rm{ref}})$. If $p$ is equal to some $[\bdeta(\beta)]\in\Xi$, we might also allow ourselves to write $w_{1,2}(\x, \beta)$ or $w_{1,2}(\x, \beta; \x^{\rm{ref}})$. This statement remains true for any functions of $p$ which can be written as just functions of $\beta$ when appropriate. \VK{Let us also note that the definition of plane-waves is well-defined, i.e., it depends only on the class $p=[\bdeta]$ and not the particular representative $\bdeta$. This can be checked by replacing $\bdeta$ with $c \bdeta$ in \eqref{e:0223} for $c\in \mathbb{C} \setminus \{0\}$.}

One can easily complexify the definition \eqref{e:0223} replacing the arguments ${\x, \x^{\rm ref}}$ by ${\z, \z^{\rm ref}}$, both belonging to~$\S^2_{\c}$: 
\begin{align}
		w_{1}(\z , p ; \z^{\rm{ref}}) = \left(- i \frac{\z \cdot \bdeta}{\z^{\rm{ref}} \cdot \bdeta }\right)^{-(1+\lambda)}\!\!\! ,
        \qquad  	
        w_{2}(\z , p; \z^{\rm{ref}}) = \left(- i \frac{\z \cdot \bdeta}{\z^{\rm{ref}} \cdot \bdeta }\right)^{\lambda}\!\!\!\!\!. \label{e:0223a}
	\end{align}


\subsection{Singularities of plane waves}
\label{sec:03-Sing}

The singularities of plane waves \eqref{e:0223} can be studied in two ways: with respect to $\x$ for a fixed $p = [\bdeta]$, 
and with respect to $[\bdeta]$ for fixed $\x$ and $\x^{\rm ref}$. The singularities with respect to 
$\x$ are given by the following lemma.

\begin{lemma}
\label{le:w_CLB} The following statements are true
\begin{itemize}
    \item[a)] The plane waves \eqref{e:0223} are solutions of the real homogeneous Laplace--Beltrami equation \eqref{e:cdd002} outside the singularity set 
defined by~\eqref{e:cdd001}. This set can be described by~$\Psi(p)$.
\item[b)] The plane waves \eqref{e:0223a} are solutions of the CLBE outside the singularity sets defined by $\z \cdot \bdeta  = 0$, 
i.e.\ outside \VK{$f_0(\hat{L}_+(p) \cup \hat L_- (p)) \subset \mathcal{S}^2_{\c}$}.
\end{itemize}
In both cases, the singularities are of the branching type as explained further in Remark~\ref{rem:024}.
\end{lemma}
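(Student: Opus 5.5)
The strategy is to reduce everything to one computation. The complexified Laplace--Beltrami operator acting on a function of the form $f(\z\cdot\boldsymbol{a})$, for a fixed vector $\boldsymbol{a}\in\mathbb{C}^3$, gives an ordinary differential expression in $s=\z\cdot\boldsymbol{a}$. The key tool is the extrinsic formula for the LBO on the unit sphere. For $F$ defined near the sphere in the ambient space and extended homogeneously of degree $0$ (i.e.\ $F(\z/|\z|)$ in the real case), $\tilde\Delta F$ equals the ambient Laplacian restricted to $\S^2$. Equivalently,
\[
\tilde\Delta F=\Delta_{\mathbb{R}^3}F-\partial_r^2F-2\partial_rF \quad\text{on } r=1 .
\]
Applied to $F=f(\x\cdot\boldsymbol{a})$, this yields
\[
\tilde\Delta f(s)=(\boldsymbol{a}\cdot\boldsymbol{a}-s^2)f''(s)-2sf'(s), \qquad s=\x\cdot\boldsymbol{a}.
\]
Alternatively one can derive this directly from \eqref{e:02304}, rotating so that $\boldsymbol{a}$ points along the $x_3$-axis when $\boldsymbol{a}\cdot\boldsymbol{a}\neq0$; for our purposes the ambient computation is cleaner. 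Since the identity is polynomial in the components of $\boldsymbol{a}$, it holds for complex $\boldsymbol{a}$ as well, and in particular for $\boldsymbol{a}=\bdeta\in\mathcal{N}$, where $\boldsymbol{a}\cdot\boldsymbol{a}=0$.

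With $\bdeta\cdot\bdeta=0$ and $f(s)=(cs)^{\mu}$, where $c=-i/(\x^{\rm ref}\cdot\bdeta)$ is a constant, we get
\[
\tilde\Delta f=-s^2f''-2sf'=-\bigl(\mu(\mu-1)+2\mu\bigr)f=-\mu(\mu+1)f .
\]
The equation $-\mu(\mu+1)+\lambda(\lambda+1)=0$ holds exactly for $\mu=\lambda$ and $\mu=-(1+\lambda)$, which are precisely the exponents in $w_2$ and $w_1$. This is a pointwise differential identity, so it holds for every local branch of the power wherever $s\neq0$, i.e.\ away from the set \eqref{e:cdd001}. The identification of that set with $\Psi(p)$ follows from the definition of $\Psi$ as the map sending $p=[\bdeta]$ to the two real points with $\x\cdot\bdeta=0$. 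This proves part a).

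For part b) I would repeat the argument holomorphically. The CLBO \eqref{e:f007} has the same coefficients, so the identity above, verified for real $\x$ in the coordinates $(\theta,\varphi)$, extends by analytic continuation in $(\theta,\varphi)$: both sides are holomorphic in $(\theta,\varphi)$ and agree on the totally real set. Alternatively, one can redo the computation with holomorphic derivatives, using the formula from Appendix~\ref{app:usefulformulas}. Hence $w_{1,2}(\cdot,p)$ solves the CLBE wherever $\z\cdot\bdeta\neq0$ and $\z$ lies in the chart; near $(0,0,\pm1)$ one uses the rotated chart, and invariance under $\mathrm{SO}_{\c}(3)$ gives the same conclusion there. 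It remains to identify the zero set $\{\z\in\S^2_{\c}:\z\cdot\bdeta=0\}$ with $f_0(\hat L_+(p)\cup\hat L_-(p))$. The characteristic lines through $p=[\bdeta]$ are the sets $\hat L(\class{\uu};[\bdeta])$, whose finite points are $\z=\z'+c\bdeta$ with $\z'\cdot\bdeta=0$, so they lie in $\{\z\cdot\bdeta=0\}$. Conversely, any $\z$ with $\z\cdot\bdeta=0$ lies on $L(\z;\bdeta)$, which by Lemma~\ref{lem:charac}~b) is one of the two lines through $p$.

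Finally, the singularities are of branching type because $f=(cs)^{\mu}$ with $\mu\notin\mathbb{Z}$ (since $\lambda\notin\mathbb{Z}$) has monodromy $e^{2\pi i\mu}\neq1$ around $s=0$. I expect the main obstacle to be justifying the key formula $\tilde\Delta f(s)=(\boldsymbol{a}\cdot\boldsymbol{a}-s^2)f''-2sf'$ cleanly for null vectors, where no rotation can bring $\boldsymbol{a}$ to the pole. The ambient homogeneous-extension argument avoids this, provided one checks the radial correction terms carefully.
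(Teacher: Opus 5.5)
Your argument is correct, but it takes a different route from the paper. The paper checks a special case and then uses symmetry. For $\bdeta=(1,\pm i,0)$ the plane waves become $(-\sin\theta\, e^{\pm i\varphi})^{-(1+\lambda)}$ and $(-\sin\theta\, e^{\pm i\varphi})^{\lambda}$, which are verified directly against \eqref{e:f007} for $\sin\theta\neq0$, and every other $p\in\Xi$ is reached by a rotation. The limit of $G(\z;\z_0)$ is offered there only as a heuristic. You instead prove the general identity $\tilde\Delta f(\x\cdot\boldsymbol a)=(\boldsymbol a\cdot\boldsymbol a-s^2)f''(s)-2sf'(s)$ from the polar form of the ambient Laplacian. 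You then use $\bdeta\cdot\bdeta=0$ to reduce the equation to the Euler equation $s^2f''+2sf'=\lambda(\lambda+1)f$.

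Your route buys several things. It is uniform in $\bdeta$, so you never need the fact, asserted without proof in the paper, that rotations act transitively on $\Xi$. It explains why exactly the exponents $\lambda$ and $-1-\lambda$ occur: they are the indicial roots. With $\boldsymbol a\cdot\boldsymbol a=1$ the same formula gives the Legendre operator, i.e.\ the check that $P_\lambda(-\z\cdot\z_0)$ solves the CLBE. You also actually prove that $\{\z\cdot\bdeta=0\}$ equals $f_0(\hat L_+(p)\cup\hat L_-(p))$ via Lemma~\ref{lem:charac}~b), which the paper only states. The paper's route, in turn, is shorter and works directly in the complex coordinates in which the CLBE is defined, so part~b) needs no separate continuation step. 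It also makes the branching of Remark~\ref{rem:024} visible at once.

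Two small refinements. First, the passage to complex $\boldsymbol a$ is not justified by ``polynomiality'', since the identity involves $f$ evaluated at $\x\cdot\boldsymbol a$. Instead, note that the chain-rule computation $\Delta F=(\boldsymbol a\cdot\boldsymbol a)f''$, $\partial_rF=sf'$, $\partial_r^2F=s^2f''$ holds verbatim for complex $\boldsymbol a$ and a local holomorphic branch of $f$. Second, the $(\theta,\varphi)$ chart fails on the whole set $z_1^2+z_2^2=0$, which consists of the four characteristic lines through $(0,0,\pm1)$, not just near the poles. So a rotated chart is needed along those lines; your continuation argument handles this equally well in any chart.
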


The statement of the lemma seems obvious since the plane waves are obtained in the course of a limiting procedure 
in which the Green's function $G(\z ; \z_0)$ obeys the CLBE or the real Laplace--Beltrami equation. 

One can also prove the lemma explicitly by considering  $w_j$ in the form
\eqref{e:02315}, and taking $\bdeta = (1 , \pm i ,0)$.
In the complex coordinates $(\theta, \vph)$ the plane waves can be written as 
\begin{align}
    w_1 = (- \sin \theta \, e^{\pm i \vph} )^{-(1 + \lambda)}, 
\qquad 
w_2 = (- \sin \theta \, e^{\pm i \vph} )^{\lambda}. \label{e:043}
\end{align}

One can check that these expressions obey  \eqref{e:f007} if $\sin \theta \ne 0$. 
Any other choice of $\bdeta$ can be obtained from  $\bdeta = (1 , \pm i ,0)$ by 
a rotation of the coordinates.    

\begin{remark}\label{rem:024}
        The branching of $w_1(\z,p;\z^{\mathrm{ref}})$ and $w_2(\z,p;\z^{\mathrm{ref}})$ in the $\z$-variable is as follows. Let $\sigma_{+}\subset \S^2_{\c}$ denote a loop \VK{such that $\iota(\sigma_+)$} encircles \VK{$\hat{L}_{+}(p)$} exactly once, and let $\sigma_{-}\subset \S^2_{\c}$ denote a loop \VK{such that $\iota(\sigma_-)$} encircles \VK{$\hat{L}_{-}(p)$} exactly once. \VK{These curves are oriented} such that the curves described by \VK{$\hat{\Phi}_+(\iota(\sigma_+))$} and \VK{$\hat{\Phi}_-(\iota(\sigma_-))$} encircle $p\in \Xi$ in the positive direction (on $\Xi$). Then, the branching of $w_1(\z,p;\z^{\mathrm{ref}})$ and $w_2(\z,p;\z^{\mathrm{ref}})$ is according to
            \begin{align}
                w_1(\z,p;\z^{\mathrm{ref}}) \overset{\sigma_{\pm}}{\longrightarrow} e^{-2\pi i (1+\lambda)} w_1(\z,p;\z^{\mathrm{ref}}) ~ \text{ and } ~  w_2(\z,p;\z^{\mathrm{ref}}) \overset{\sigma_{\pm}}{\longrightarrow} e^{2\pi i \lambda} w_2(\z,p;\z^{\mathrm{ref}}). \label{e:045}
            \end{align}
         The correctness of \eqref{e:045} can be verified directly from the explicit formulae given in \eqref{e:043}, by taking $\sigma_{\pm}$ to be curves in the complex $\theta$-plane encircling $\theta=0$ exactly once, in the positive direction. 
    \end{remark}

The singularities with respect to $[\bdeta]$ are given by 
another lemma:

\begin{lemma}
\label{le:sing_w}
For fixed $\x, \x^{\rm ref} \in \S^2$, the functions $w_{1,2} (\x , p ; \x^{\rm ref})$ (considered as functions of $p=[\bdeta] \in \Xi$) 
are holomorphic on $\Xi$ outside the singularity  set defined by 
\begin{equation}
	\x \cdot \bdeta =0 ~ \text{ or } ~ \x^{\rm ref} \cdot \bdeta =0,
    \label{e:0217}
\end{equation}
i.e.\ outside the set $\Phi (\x) \cup \Phi(\x^{\rm ref})$.

The singularity is of the branching type: for a local complex  variable $\tau$ centered at one of the points  $\Phi_{\pm} (\x)$
the function $w_1$ behaves as $\tau^{-\lambda-1}$, while $w_2$ behaves as $ \tau^{\lambda}$ as $\tau\to0$.
Similarly, for a local complex variable $\tau$ centered at one of the points  $\Phi_{\pm} (\x^{\rm ref})$
the function $w_1$ behaves as $\tau^{\lambda+1}$, while $w_1$ behaves as $\tau^{-\lambda}$ as $\tau\to0$.
\end{lemma}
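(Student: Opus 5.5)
The plan is to reduce everything to the local behaviour of the linear forms $\bdeta \mapsto \x\cdot\bdeta$ and $\bdeta\mapsto \x^{\rm ref}\cdot\bdeta$ on $\Xi$. Since $w_{1,2}$ depend only on the class $p=[\bdeta]$, we may work in any of the local charts of $\Xi$ introduced earlier: $\beta$ away from $\beta=\pm i\infty$, and $\tau_\pm$ near $[(1,\pm i,0)]$. In each chart we take a holomorphic representative $\bdeta(\cdot)$; for instance $(\cos\beta,\sin\beta,i)$, or the vector in \eqref{e:xdd002}. Then the ratio
\[
R(p)=-i\,\frac{\x\cdot\bdeta}{\x^{\rm ref}\cdot\bdeta}
\]
is a meromorphic function on the compact Riemann surface $\Xi$, being a ratio of two holomorphic sections of the same line bundle (restrictions of linear forms on $\CP^3$). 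Away from the zeros and poles of $R$, the functions $R^{-(1+\lambda)}$ and $R^{\lambda}$ are locally holomorphic. Starting from the branch fixed by \eqref{e:0224}, they continue analytically along any path avoiding these points. This gives holomorphy outside $\{\x\cdot\bdeta=0\}\cup\{\x^{\rm ref}\cdot\bdeta=0\}$.

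Next we identify these zero sets. By the description of the maps after Lemma~\ref{lem:charac}, the points $p=[\bdeta]\in\Xi$ with $\x\cdot\bdeta=0$ are exactly $\Phi_\pm(\x)$, i.e.\ the set $\Phi(\x)$, and similarly for $\x^{\rm ref}$. For real $\x$ these two points are distinct: they are $[\bdeta(\beta)]$ and $[\bdeta(\bar\beta+\pi)]$, and equality would force a real null vector, which is impossible. A cleaner way to see this is to rotate $\x$ to $\xNP$ by a real rotation, which preserves $\Xi$ and the dot product. Then $\x\cdot\bdeta(\beta)=i$ has no zeros in the $\beta$-chart. In the $\tau_\pm$ chart, \eqref{e:xdd002} gives $\x\cdot\bdeta=i\tau_\pm$, which has a simple zero at $\tau_\pm=0$, i.e.\ at $\beta=\pm i\infty$, consistent with \eqref{e:ddd002}.

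The key step, and the only real obstacle, is to show that each zero is \emph{simple}, so that the exponents come out exactly as stated. The rotation argument handles this: after rotating, $\x\cdot\bdeta$ vanishes to first order in the local coordinate $\tau_\pm$ at each point of $\Phi(\x)$. Since rotations act biholomorphically on $\Xi$, simplicity is preserved in any local coordinate $\tau$ centred at $\Phi_\pm(\x)$.

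Finally we treat the local behaviour at each singular point. Suppose $\Phi(\x)\cap\Phi(\x^{\rm ref})=\emptyset$, i.e.\ $\x\neq\pm\x^{\rm ref}$. Near $\Phi_\pm(\x)$ we have $R=\tau\,h(\tau)$ with $h$ holomorphic and nonzero, so $w_1=\tau^{-\lambda-1}h^{-\lambda-1}$ and $w_2=\tau^{\lambda}h^{\lambda}$. Near $\Phi_\pm(\x^{\rm ref})$ we have $R=\tau^{-1}h(\tau)$, which gives $w_1\sim\tau^{\lambda+1}$ and $w_2\sim\tau^{-\lambda}$. The statement's second "$w_1$" should read $w_2$. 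These are genuine branch points since $\lambda\notin\mathbb{Z}$. In the degenerate case $\x=\pm\x^{\rm ref}$ the factors cancel: $R$ is constant and $w_{1,2}$ are holomorphic everywhere. The lemma remains true in that case, with the singular set being only potential.
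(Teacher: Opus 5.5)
Your proof is correct, and it is a more careful version of what the paper only sketches. The paper's entire justification is that the statement ``can be checked by using the parametrisation $p=[\bdeta(\beta)]$''. That is, it intends a direct computation for general $\x=(\theta,\varphi)$: there $\x\cdot\bdeta(\beta)=\sin\theta\cos(\beta-\varphi)+i\cos\theta$ vanishes when $\cos(\beta-\varphi)=-i\cot\theta$, and its $\beta$-derivative equals $\pm1\neq0$ at those points.

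You proceed differently. You treat $R$ as a meromorphic function on the compact surface $\Xi$, and you use real rotations, which act biholomorphically on $\Xi$, to reduce the order-of-vanishing question to $\x=\xNP$. There $\x\cdot\bdeta=i\tau_\pm$ in the charts \eqref{e:xdd002}.

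Each route has its advantage. The direct route locates the branch points explicitly in the integration variable $\beta$. Yours avoids computation and also covers the points $\beta=\pm i\infty$, which a check done purely in the $\beta$-chart misses. Those points are exactly where $\Phi(\x^{\rm ref})$ lies in the paper's application $\x^{\rm ref}=\xSP$. Simplicity of the zeros also follows from a degree count: a linear form restricted to the conic $\Xi$ has two zeros counted with multiplicity, and you have shown that for real $\x$ these are distinct.

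Your two side remarks are right, and the statement as written overlooks both. First, the second ``$w_1$'' should read $w_2$. Second, the branching assertions require $\x\neq\pm\x^{\rm ref}$; otherwise $R\equiv\mp i$ and $w_{1,2}$ are constant in $p$, so only the holomorphy part of the lemma survives.

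One small imprecision: \eqref{e:0224} fixes the branch at $\x=\x^{\rm ref}$, not at a point of $\Xi$ for fixed $\x$. The lemma only concerns local branches and their continuation along paths, so this does not affect your argument.
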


The statement of the lemma can be checked, say, by using the parametrisation ${p=[\bdeta(\beta)]}$.

    \begin{remark}
            If the $\beta$-parametrisation of $\Xi$ is chosen such that $\Psi(\x^{\mathrm{ref}})=\{i \infty, -i \infty\}$, then the condition $\x^{\mathrm{ref}} \cdot \boldsymbol{\eta} =0$ gives exactly the singular points $\beta= \pm i \infty$ on $\Xi$. This can be seen from \eqref{e:ddd002} upon choosing $\x^{\mathrm{ref}} = \xNP$, and it is confirmatory of the $\beta$ singularities appearing in the representation \eqref{e:02315}. For general $\x^{\mathrm{ref}}$, the statement of this remark can be verified by using the rotational invariance of characteristics.
         \end{remark}



\section{Plane-wave representation of a field}
\label{sec:Plane-Wave}

\subsection{Structure of a plane-wave representation}


The construction below heavily exploits the fact that the infinity set $\Xi$ is a complex manifold having the structure 
of a Riemann sphere. A choice of the local coordinate (it may be, say, \eqref{e:bdd001} or \eqref{e:xdd002}) 
can be made for each particular problem. 

Let $u(\x)$ be some function of $\x \in \S^2$, and $\x^{\rm ref}$ be some reference point on~$\S^2$;
By a ``plane-wave representation'' of $u$, we refer to any integral representation of the form 
\begin{align}
	u(\x) = \int_{\gamma_1} A_1(p) \, w_1(\x,p; \x^{\rm ref}) \, \Omega + \int_{\gamma_2} A_2(p) \, w_2(\x,p; \x^{\rm ref})\,\Omega, 
    \label{e:0436}
\end{align}
where 
\begin{itemize}
	
    \item $A_{1,2} (p)$ are {\em spectral functions\/} defined on $\Xi$ and holomorphic in a certain domain there.
	
    \item $\gamma_{1,2} \subset \Xi$ are smooth oriented contours of integration on $\Xi$. We use closed contours (loops).
	
    \item $\Omega = \Omega(p)$ is some meromorphic 1-form on~$\Xi$. 
    
\end{itemize}

The representation \eqref{e:0436} is very general. To specify it, 
we can choose the coordinate $\beta$ on $\Xi$ according to \eqref{e:bdd001}. 
A reasonable choice for $\Omega$ is then 
$\Omega = d \beta$, and \eqref{e:0436} reads as 
\begin{equation}
u(\x) = \sum_{j = 1}^2 \int_{\gamma_j} A_j(\beta) \, w_j (\x , [\bdeta (\beta)] ; \x^{\rm ref}) \, d\beta. 
 \label{e:0436a}
\end{equation}

Let us study the form $\Omega = d\beta$ on~$\Xi$. The form is holomorphic everywhere except at the points 
$\beta = \pm i \infty$. Consider the point $\beta = -i \infty$. Introduce the variable $\tau_- = e^{- i \beta}$ as in \eqref{e:xdd001}. 
One can see that $\tau_- = 0$ for $\beta = - i \infty$, and 
\[
d\beta = i \frac{d\tau_-}{\tau_-}.
\]
Thus, $\Omega$ has a simple pole at $\beta = -i \infty$ with residue equal to~$i$.
Similarly, one can study $\Omega$ at $\beta = i \infty$ by introducing the local variable 
$\tau_+ = e^{i \beta}$. The form $\Omega$ has a simple pole there with residue equal to~$-i$.

Let us discuss some properties of the decomposition \eqref{e:0436}. 
By applying Lemma~\ref{le:w_CLB}, we can directly obtain the following statement: 

\begin{lemma}
\label{le:LB_u}
The field $u(\x)$ defined by \eqref{e:0436} obeys the real Laplace--Beltrami equation
\eqref{e:cdd002} for $\x \in \S^2 \setminus \Psi (\gamma_1 \cup \gamma_2)$. 
\end{lemma}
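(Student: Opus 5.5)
The plan is to differentiate under the integral sign in \eqref{e:0436} and then apply Lemma~\ref{le:w_CLB}\,a) to the integrand pointwise in $p$.

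Fix a point $\x_* \in \S^2 \setminus \Psi(\gamma_1 \cup \gamma_2)$. The contours $\gamma_j$ are compact loops on $\Xi$, and $\Psi$ is continuous, being the union of the two diffeomorphisms $\Psi_\pm$. Hence $\Psi(\gamma_1\cup\gamma_2)$ is a compact subset of $\S^2$. We can therefore choose a closed geodesic disc $D$ around $\x_*$ that avoids it, say contained in a coordinate chart $(\theta,\varphi)$, obtained by a rotation if $\x_*$ is a pole.

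By definition of $\Psi$, for every $\x\in D$ and every $p=[\bdeta]\in\gamma_1\cup\gamma_2$ we have $\x\cdot\bdeta\neq 0$. We also need $\x^{\rm ref}\cdot\bdeta\ne0$ on the contours, i.e.\ $\gamma_j \cap \Phi(\x^{\rm ref}) = \emptyset$. This is implicitly required for \eqref{e:0436} to make sense at all (Lemma~\ref{le:sing_w}); alternatively, the factor $(\x^{\rm ref}\cdot\bdeta)$ is independent of $\x$ and only multiplies the spectral function, so singularities there, if integrable, are harmless. Under this assumption, the functions $(\x,p)\mapsto w_j(\x,p;\x^{\rm ref})$ are real-analytic in $\x$ and continuous in $p$ on $D\times\gamma_j$, with a branch fixed continuously along $\gamma_j$ by \eqref{e:0224}. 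Indeed, the argument $-i(\x\cdot\bdeta)/(\x^{\rm ref}\cdot\bdeta)$ never vanishes there, so the powers are smooth functions of it. All their $\x$-derivatives up to order two are therefore continuous on the compact set $D\times\gamma_j$, hence bounded. The same holds after multiplying by $A_j(p)$, which is continuous on $\gamma_j$, and by the smooth form $\Omega$ restricted to the contour. If $\Omega$ has poles on $\Xi$, these must lie off $\gamma_j$ for the integral to be defined.

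The main technical step is this uniform bound, which justifies interchanging $\tilde\Delta$, written as \eqref{e:02304} on $D$, with the contour integrals. Once it is established, we get
\[
\bigl(\tilde\Delta+\lambda(\lambda+1)\bigr)u(\x)=\sum_{j=1}^2\int_{\gamma_j}A_j(p)\,\bigl(\tilde\Delta+\lambda(\lambda+1)\bigr)w_j(\x,p;\x^{\rm ref})\,\Omega .
\]
By Lemma~\ref{le:w_CLB}\,a), each integrand vanishes, since $\x\notin\Psi(p)$ for all $p$ on the contours. Thus $u$ satisfies \eqref{e:cdd002} on $D$, and since $\x_*$ was arbitrary, on all of $\S^2\setminus\Psi(\gamma_1\cup\gamma_2)$.

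The only genuinely delicate point is the consistency of branches: one must check that the branch of $w_j$ selected on each contour varies continuously in $\x\in D$. This holds because $D$ can be taken simply connected and the argument of the power avoids zero on $D\times\gamma_j$.
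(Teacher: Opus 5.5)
Your proposal is correct and follows the paper's route. The paper's entire argument is ``apply Lemma~\ref{le:w_CLB} under the integral sign'', and you supply what it leaves implicit: compactness of $D\times\gamma_j$ and differentiation under the integral. One minor imprecision: simple connectivity of $D$ alone does not give a continuous branch on $D\times\gamma_j$, because $\gamma_j$ is a loop. You also need $w_j(\x_*,\cdot)$ to be single-valued along $\gamma_j$, but that is already presupposed for \eqref{e:0436} to define $u(\x_*)$, so nothing breaks.
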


We are going to use Cauchy's theorem for the representation \eqref{e:0436}. The following lemma will be used for this:
\begin{lemma}
\label{le:hol_u}
Let $U_j \subset \Xi$, $j = 1,2$, be open domains in $\Xi$, such that the forms $A_j (p) \Omega(p)$ are holomorphic for $p \in U_j$.
Then the integrand of the $j$th term of \eqref{e:0436}, i.e.\ the form 
\[
\omega_j(p) =  A_j (p) \, w_j (\x , p ; \x^{\rm ref}) \, \Omega(p),
\]
is holomorphic in the domain $U_j \setminus (\Phi(\x) \cup \Phi(\x^{\rm ref}))$.
\end{lemma}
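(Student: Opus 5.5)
The plan is to reduce the statement to Lemma~\ref{le:sing_w}, using the fact that a product of a holomorphic function and a holomorphic 1-form is a holomorphic 1-form. Fix $\x,\x^{\rm ref}\in\S^2$ and $j\in\{1,2\}$. By hypothesis, $A_j(p)\Omega(p)$ is a holomorphic 1-form on $U_j$. We may therefore write, locally,
\[
\omega_j(p) = w_j(\x,p;\x^{\rm ref})\,\bigl(A_j(p)\Omega(p)\bigr).
\]
It then suffices to show that $p\mapsto w_j(\x,p;\x^{\rm ref})$ is holomorphic on $U_j\setminus(\Phi(\x)\cup\Phi(\x^{\rm ref}))$. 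This is exactly the content of Lemma~\ref{le:sing_w}, which says that $w_{1,2}$ are holomorphic on $\Xi$ outside the set where $\x\cdot\bdeta=0$ or $\x^{\rm ref}\cdot\bdeta=0$, i.e.\ outside $\Phi(\x)\cup\Phi(\x^{\rm ref})$.

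To make this rigorous I would check holomorphy chart by chart on $\Xi$. On the $\beta$-chart \eqref{e:bdd001}, write $A_j\Omega = a_j(\beta)\,d\beta$ with $a_j$ holomorphic. The quantities $\x\cdot\bdeta(\beta)$ and $\x^{\rm ref}\cdot\bdeta(\beta)$ are entire in $\beta$. Their ratio is therefore holomorphic and nonvanishing away from the zeros of either factor, and these zeros are exactly the $\beta$-values of $\Phi(\x)\cup\Phi(\x^{\rm ref})$. Near the points $\beta=\pm i\infty$, I would instead use the charts $\tau_\pm$ from \eqref{e:xdd002}. There $\x\cdot\bdeta$ is a polynomial in $\tau_\pm$, and the same argument applies. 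In each chart the coefficient of $\omega_j$ is then (holomorphic)$\times$(a power of a nonvanishing holomorphic function), which is holomorphic.

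The only delicate point is the meaning of the complex powers $(\cdot)^{-(1+\lambda)}$ and $(\cdot)^{\lambda}$ on a domain that need not be simply connected. Since the base is nonvanishing there, the power is locally holomorphic for any choice of branch. However, $U_j\setminus(\Phi(\x)\cup\Phi(\x^{\rm ref}))$ may have nontrivial topology, so the function may be multivalued. I would therefore state the conclusion as holomorphy of every local branch, i.e.\ of the analytic continuation of the branch fixed by \eqref{e:0224} along paths in the domain, which is what Cauchy's theorem requires. This is consistent with the branching behaviour $\tau^{-\lambda-1}$, $\tau^{\lambda}$ described in Lemma~\ref{le:sing_w}. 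Apart from this branch bookkeeping, the proof is immediate.
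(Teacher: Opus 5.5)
Your proposal is correct and takes the same route as the paper. The paper's proof is the single line ``this follows from Lemma~\ref{le:sing_w}'': $w_j$ is holomorphic in $p$ away from $\Phi(\x)\cup\Phi(\x^{\rm ref})$, and $A_j\Omega$ is holomorphic on $U_j$. Your chart-by-chart check, and your remark that holomorphy is to be understood branch-wise (the domain may not be simply connected), make explicit what the paper leaves implicit.
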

This statement follows from Lemma~\ref{le:sing_w}.

Let us explain how Lemma~\ref{le:hol_u} will be used. Let $u(\x)$ be described by \eqref{e:0436} in some domain $X \subset \S^2$. 
Then for all such $\x$ the form $\omega_j(p)$ is holomorphic in 
\[
U_j' (X) =  U_j \setminus (\Phi(X) \cup \Phi(\x^{\rm ref})) \subset \Xi.
\]
Thus, by Cauchy's theorem, one can freely deform the contour of integration $\gamma_j$ within $U_j' (X)$ such that the values $u(\x)$ do not change. For a deformed contour, the region of validity of the representation  \eqref{e:0436a} generally changes from $X$, to another domain $X' \subset \S^2$. Thus, we introduce the concept of a \textit{sliding contour} that helps us 
to extend the validity of \eqref{e:0436a} to a wider region on~$\S^2$. We will now exhibit this process by considering the specific case of the Green's function $G(\x;\x_0)$.


\subsection{A first plane-wave representation of the Green's function}
\label{sec:PWGreen}

Consider the well-known formula for the Legendre function $P_{\lambda}$ \cite[Chapter~3]{Bateman1953}: 
\begin{equation}
	P_\lambda (q)
	=
	\frac{1}{2\pi}
	\int_{-\pi}^{\pi}
	\left( 
	q + i \sqrt{1- q^2} \cos \alpha
	\right)^\lambda
	d\alpha, 
	\qquad
	0<q\leq1.
	\label{e:02323b}
\end{equation}
This formula can be converted into a plane-wave representation of the Green's function $G(\x ; \x_0)$ as is formalised in the following lemma.

\begin{lemma}\label{Lem:0441}
For $\arccos(\x \cdot \xNP) > \pi/2$ and $\bdeta(\beta)$ given by \eqref{e:bdd001}, we have 
\begin{equation}
		G(\x;\xNP) = \frac{(-i)^{-\lambda}}{8 \pi \sin (\pi \lambda)} \int_{\gamma}
			w_2(\x,[\bdeta(\beta)]; \xSP) \, d \beta  ,
\label{e:02324}
\end{equation}
	where the contour $\gamma \subset \Xi$ is the oriented segment $[-\pi , \pi]$ in the variable~$\beta$ (a loop on $\Xi$ given the $2\pi$-periodicity of the $\beta$-plane).   
\end{lemma}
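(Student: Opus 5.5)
The plan is a direct substitution into the Legendre integral \eqref{e:02323b}, followed by a careful check that the branch of the power agrees with the normalisation \eqref{e:0224}. Write $\x=(\sin\theta\cos\varphi,\sin\theta\sin\varphi,\cos\theta)$ with $\theta=\arccos(\x\cdot\xNP)\in(\pi/2,\pi]$.

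\emph{Step 1: simplify the plane wave.} For $\x^{\rm ref}=\xSP$ we have $\xSP\cdot\bdeta(\beta)=-i$. Hence
\[
-i\,\frac{\x\cdot\bdeta(\beta)}{\xSP\cdot\bdeta(\beta)}=\x\cdot\bdeta(\beta)=\sin\theta\cos(\beta-\varphi)+i\cos\theta ,
\]
so $w_2(\x,[\bdeta(\beta)];\xSP)=(\sin\theta\cos(\beta-\varphi)+i\cos\theta)^\lambda$. Its branch is fixed by $w_2(\xSP,\cdot\,;\xSP)=e^{-i\pi\lambda/2}$. Since $\cos\theta<0$, the base has strictly negative imaginary part for all real $\beta$. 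Thus the integrand has no singularity on $\gamma$ (consistent with Lemma~\ref{le:sing_w}, as $\Phi(\x)$ and $\Phi(\xSP)$ lie off the real $\beta$-axis).

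\emph{Step 2: apply the Legendre formula.} By \eqref{e:02309}, $G(\x;\xNP)=P_\lambda(q)/(4\sin\pi\lambda)$ with $q=-\cos\theta\in(0,1]$ and $\sqrt{1-q^2}=\sin\theta$. Use \eqref{e:02323b} and factor
\[
-\cos\theta+i\sin\theta\cos\alpha=i\,(\sin\theta\cos\alpha+i\cos\theta).
\]
Substitute $\alpha=\beta-\varphi$. By $2\pi$-periodicity the interval of integration can be shifted back to $[-\pi,\pi]$, i.e.\ to the loop $\gamma$. This gives
\[
G=\frac{1}{8\pi\sin\pi\lambda}\int_\gamma \bigl(i(\x\cdot\bdeta(\beta))\bigr)^\lambda d\beta .
\]
Finally write $(i\,\x\cdot\bdeta)^\lambda=i^\lambda(\x\cdot\bdeta)^\lambda$, with $i^\lambda=e^{i\pi\lambda/2}=(-i)^{-\lambda}$, which is the prefactor in \eqref{e:02324}.

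\emph{Step 3: match the branches (main obstacle).} In \eqref{e:02323b} the power is the principal one. Its base $-\cos\theta+i\sin\theta\cos\alpha$ has positive real part $-\cos\theta>0$, so its argument lies in $(-\pi/2,\pi/2)$. Consequently $\arg(\x\cdot\bdeta)=\arg(\text{base})-\pi/2$ lies in $(-\pi,0)$. The splitting $(i\,\x\cdot\bdeta)^\lambda=e^{i\pi\lambda/2}(\x\cdot\bdeta)^\lambda$ is therefore valid provided $(\x\cdot\bdeta)^\lambda$ is taken with argument in $(-\pi,0)$.

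It remains to show this is the branch selected by \eqref{e:0224}. The region $\{\theta>\pi/2\}\times\gamma$ is connected and contains $\xSP$. On it the value $\x\cdot\bdeta$ stays in the open lower half-plane, so the continuous branch is exactly the one with argument in $(-\pi,0)$. At $\x=\xSP$ we get $\x\cdot\bdeta=-i$ with argument $-\pi/2$, giving $e^{-i\pi\lambda/2}$, in agreement with \eqref{e:0224}. This branch bookkeeping is the only delicate point; the remaining steps are direct substitution.
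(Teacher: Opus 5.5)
Your proposal is correct and follows the paper's proof essentially step for step: rewrite $G(\x;\xNP)$ via \eqref{e:02309} and \eqref{e:02323b}, identify $w_2(\x,[\bdeta(\beta)];\xSP)=\x\cdot\bdeta(\beta)$ raised to the power $\lambda$, i.e.\ $(-i)^\lambda(-\cos\theta+i\sin\theta\cos(\beta-\varphi))^\lambda$, then substitute $\alpha=\beta-\varphi$ and use $2\pi$-periodicity. Your Step~3 is also correct and makes explicit what the paper leaves as ``one can directly check'': $\x\cdot\bdeta$ stays in the open lower half-plane on the lower hemisphere, which selects the branch matching \eqref{e:0224} at $\xSP$ and justifies splitting off $(-i)^{-\lambda}$.
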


\begin{proof}

Parametrise $\x$ by \eqref{Ch02.eq.01} and note that the condition $\arccos(\x \cdot \xNP) > \pi/2$ implies that $\pi/2<\theta\leq\pi$, so that $\cos(\theta)<0$ and $\sin(\theta)\geq0$.
Use \eqref{e:02309} and \eqref{e:02323b} to rewrite $G(\x ; \xNP)$ in the form 
\[
G(\x ; \xNP)  = \frac{1}{8 \pi \sin (\pi \lambda)} 
\int_{-\pi}^\pi \left(   
-\cos \theta + i \sin \theta \, \cos \alpha 
\right)^\lambda \, d \alpha, \qquad \pi/2 <\theta \le \pi. 
\]
Using the definition \ref{def:planewaves} of plane waves and the definition \eqref{e:bdd001} of $\bdeta(\beta)$, one can directly check that 
\[
w_2(\x, [\bdeta(\beta)]; \xSP) = 
( \boldsymbol{\eta}(\beta) \cdot \x)^{\lambda} =(-i)^{\lambda}\left(- \cos \theta +  i\sin \theta \, \cos (\beta - \vph)\right)^\lambda.
\]
Setting $\alpha = \beta - \vph$ and using the periodicity of the integrand with respect to $\alpha$, obtain~\eqref{e:02324}.

\end{proof}

One can see that \eqref{e:02324} is a plane-wave representation of $G(\x ; \xNP)$ with
 
\[
A_1 \Omega = 0, \qquad A_2 \Omega = \frac{(-i)^{-\lambda} d\beta}{8 \pi \sin (\pi \lambda)}. 
\]

The expression describes $G(\x ; \xNP)$ in the lower hemisphere $\theta > \pi/2$. Note that we have placed the reference point $\x^{\rm ref} = \xSP$
in this hemisphere as well. This makes the  definition of the branch of the plane waves straightforward.  

According to the explicit formulae \eqref{e:0229}, \eqref{e:0230}, the image $\Phi(\gamma)$ is the equator $\theta = \pi/2$
on~$\S^2$. Thus, according to Lemma~\ref{le:LB_u}, the expression \eqref{e:02324} provides a field obeying the real Laplace--Beltrami 
equation for $\theta > \pi/2$. 

\begin{remark}
It may seem that we haven't introduced anything new so far, just represented the known formula \eqref{e:02323b} in the new notations. 
However, there is a bit more to it. Indeed, if we look for the solution in the form of the plane-wave representation 
\[
G(\x ; \xNP) = \int_{\gamma} A_2(\beta) \, w_2 (\x , [\bdeta(\beta)] ; \xSP)
 \, d\beta,
 \]
we can guess from a symmetry argument that $A_2 (\beta)$ is a constant, so the representation \eqref{e:02324} 
becomes known, at least up to a constant coefficient. 
\end{remark}


\subsection{Extension of the plane-wave representation by sliding contours} \label{sec:slidingcontours}

Let us use the benefits of the complex structure on $\Xi$ and the possibility to deform the integration contour. 
Our aim is to extend the area of validity of the representation \eqref{e:02324} using the concept of {\em sliding contours}. 
We take our inspiration from the planar 2D case (see Appendix~\ref{app:planar}).
We are going to build a representation of the Green's function $G(\x ; \xNP)$ for $\theta > \mu$, where
$\mu$ is an arbitrary small positive constant. 

The idea of the sliding contours is as follows. The domain 
\[
X = \{ (\theta, \vph) \in \S^2 \, \, : \, \, \mu <\theta \le \pi \}  
\]
is so ``large'' that
a single plane-wave representation cannot be built for it. Thus, we are looking for several plane-wave representations 
whose areas of validity cover this domain, and which are continuations of each other. 
Namely, we look for several  open domains $X^{(m)} \subset \S^2$, contours of integration $\gamma^{(m)}$, and spectral 
functions $A_2^{(m)} (\beta)$, such that 

\begin{itemize}

\item[1)] 
The union of all $X^{(m)}$ covers $X$.

\item[2)]
The following plane-wave representations are valid
\begin{equation}
 G(\x ; \xNP) = u^{(m)}(\x) = \int_{\gamma^{(m)}} A_2^{(m)}(\beta) \, w_2 (\x , [\bdeta(\beta)]) ; \xSP ) \, d\beta, 
\qquad 
\x \in X^{(m)}.
\label{e:cdd005}
\end{equation}
This implies that 
\begin{equation}
X^{(m)} \cap \Psi ( \gamma^{(m)} )  = \emptyset.
\label{e:cdd006}
\end{equation}

\item[3)]
For non-empty intersections $X^{(m)} \cap X^{(n)}$ one can convert the plane-wave representation of $u^{(m)}(\x)$
into that of $u^{(n)} (\x)$ by using Cauchy's theorem. Namely, for this set it is possible to deform 
$\gamma^{(m)}$ into $\gamma^{(n)}$ in such a way that the representation remains valid during the deformation.
This ensures that the representations $u^{(m)} (\x)$ and $u^{(n)} (\x)$ match in the intersection domain, and
each of them is a continuation of the other in the sense of solutions of the Laplace--Beltrami equation. 

\end{itemize}

Rigorously, the property 3)  can be formulated as follows: there exists a domain $\Lambda^{(m,n)} \subset \Xi$ such that 
\[
\left\{
\begin{aligned}
  &\ptl \Lambda^{(m,n)}  = \gamma^{(m)} - \gamma^{(n)}, \\
  &A_2^{(m)} (\beta) = A_2^{(n)} (\beta) \text{ for } \beta \in \Lambda^{(m,n)},\\
  &\Lambda^{(m,n)} \cap \Phi(X^{(m)} \cap X^{(n)})  = \emptyset ,\\
  &A^{(m)}_2 d\beta \text{ is holomorphic in }\Lambda^{(m,n)},
\end{aligned}
\right.
\]
where, as is commonly used, by $-\gamma$ we mean the contour $\gamma$ with the opposite direction, and by the sum of two contours we mean that the integral over the sum is the sum of the integrals.

Let us apply the idea of sliding contours to extend the representation \eqref{e:0436a}.
Take 
\[
\gamma^{(1)} = \gamma, 
\qquad
X^{(1)} =  \{ (\theta, \vph) \in \S^2 \, \, : \, \, \theta > \pi/2 \},   
\qquad
A_2^{(1)} (\beta) = \frac{(-i)^{-\lambda} }{8 \pi \sin (\pi \lambda)},
\]
and try to find several other domains and contours obeying the properties 1) -- 3). 

Since any $A^{(m)}_2$ is a continuation of $A^{(1)}_2$, and $A^{(1)}_2$ is a constant
with respect to $\beta$, we can conclude that 
\[
A_2^{(m)} = \frac{(-i)^{-\lambda} }{8 \pi \sin (\pi \lambda)}
\]
for all $m$, so it is only necessary to find the domains $X^{(m)}$ and the contours~$\gamma^{(m)}$.

Let us take the following sets $X^{(m)}$, $m = 2,\dots , 5$:
\begin{align*}
    X^{(2)}  &= \{(x_1, x_2 , x_3) \in \S^2 \, \, : \, \, x_3 <  x_1 / \delta\}, \\
    X^{(3)}  &= \{(x_1, x_2 , x_3) \in \S^2 \, \, : \, \, x_3 < x_2 / \delta\}, \\
    X^{(4)}  &= \{(x_1, x_2 , x_3) \in \S^2 \, \, : \, \, x_3 < -x_1 / \delta\}, \\
    X^{(5)}  &= \{(x_1, x_2 , x_3) \in \S^2 \, \, : \, \, x_3 < - x_2 / \delta\}.
\end{align*}
 The small positive parameter $\delta$ is chosen in such a way that 
the union of all $X^{(m)}$ covers~$X$. The choice $\delta=\mu/2$ would work, for example.
The sets $X^{(m)}$, $m = 2, \dots, 5$ lie below 
the big circles $C^{(m)} = \ptl X^{(m)}$(with respect to the ``altitude'' $x_3$). For instance, we have
\[
C^{(2)}  =  \{(x_1, x_2 , x_3) \in \S^2 \, \, : \, \, x_3 =  x_1 / \delta\}.
\]
We also define the set $C^{(1)}$ to be the equator of~$\S^2$. The big circles $C^{(m)}$ are illustrated in \figurename~\ref{f:shanin_02}.

\begin{figure}[h]
    \centering{\includegraphics[width=\textwidth]{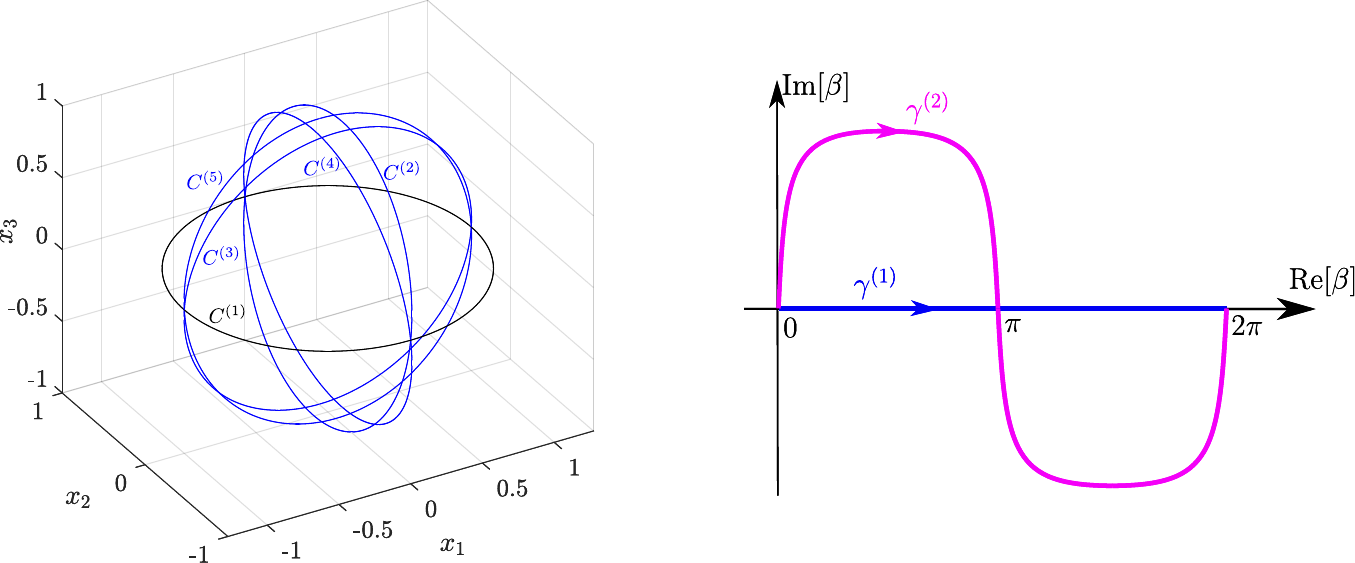}}
	\caption{Contours $C^{(m)}$ on $\S^2$ (left); Contours $\gamma^{(1)}$ and $\gamma^{(2)}$ on $\Xi$ (right)}
	\label{f:shanin_02}
\end{figure}

Define the contours $\gamma^{(m)}$ as follows: they are formed by the points belonging 
to $\Phi (C^{(m)})$ and oriented as shown in \figurename~\ref{f:shanin_02}, right.
We only display the contours $\gamma^{(1)}$ and $\gamma^{(2)}$; 
the contours $\gamma^{(3)}, \gamma^{(4)}, \gamma^{(5)}$ are obtained from 
$\gamma^{(2)}$ by shifting it by $\pi / 2, \pi , 3\pi/2$ to the right (and use periodicity to display them on the $(0,2\pi)$ strip in the $\beta$-plane).

Let us demonstrate that the property 3) is valid. As an example, consider the intersection 
$X^{(1)} \cap X^{(2)}$ (see \figurename~\ref{f:shanin_05}, left). The field in this 
domain should be described by \eqref{e:cdd005} for $m = 1$ and $m = 2$. 
To prove that these representations are identical, we should show that one can 
deform $\gamma^{(1)}$ into $\gamma^{(2)}$ for $\x \in X^{(1)} \cap X^{(2)}$. 

Note that $\x^{\rm ref}  = \xSP \in X^{(1)} \cap X^{(2)}$. Besides, the points 
$\beta = \pm i \infty$ (the poles of $\Omega$) belong to $\Phi(X^{(1)} \cap X^{(2)})$.

Let us display $\Xi$ as the sphere parametrised by $\beta$, as we did in \figurename~\ref{f:shanin_fig_03}. 
On \figurename~\ref{f:shanin_05}, the contours $\gamma^{(1)}$ and $\gamma^{(2)}$ are shown by blue and magenta lines respectively, and the 
domain ${\Phi (X^{(1)} \cap X^{(2)})}$ is shaded. We take the unshaded domain as~$\Lambda^{(1,2)}$.  
According to Lemma~\ref{le:hol_u}, the integrand of \eqref{e:cdd005} is holomorphic in 
$\Xi \setminus \Phi (X^{(1)} \cap X^{(2)})$. Therefore, the contour $\gamma^{(1)}$ can be deformed into 
$\gamma^{(2)}$ without hitting the singularities. So, according to Cauchy's theorem, 
the fields $u^{(1)} (\x)$ and~$u^{(2)} (\x)$ are identical in $X^{(1)} \cap X^{(2)}$.
The other intersections of the domains $X^{(m)}$ are studied in the same way. 

Therefore, we established that $G(\x; \xNP)$, $\theta > \mu$, is given by a plane-wave representation of the form
(\ref{e:cdd005}) with a system of sliding contours $\gamma^{(1)}, \dots , \gamma^{(5)}$. 

A generalisation of these sliding contours plane-wave representation formulae giving $G(\x;\x_0)$ for some arbitrary point source location $\x_0$ is provided in Appendix~\ref{app:sym}. In this same appendix, we also explain why this representation entails the expected reciprocity property of the Green's function, as well as the $\lambda$-symmetry property of the Legendre function. 

\begin{figure}[h]
    \centering{\includegraphics[width=0.8\textwidth]{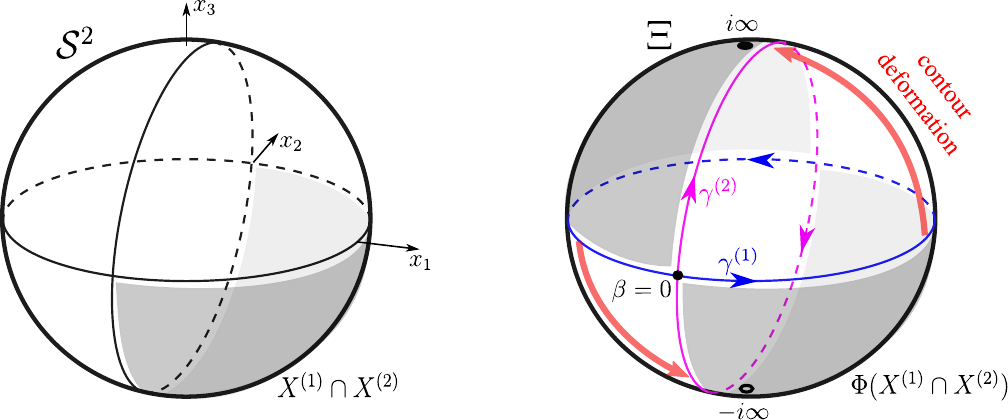}}
	\caption{Domain $X^{(1)}\cap X^{(2)}$ (left); 
    deformation of $\gamma^{(1)}$ into $\gamma^{(2)}$ on $\Xi$ (right)}
	\label{f:shanin_05}
\end{figure}


\begin{remark}
Let  $\x \in X^{(1)}$. The points $\Phi_+ (\x)$ and $\Phi_-(\x)$ are separated by $\gamma^{(1)}$. 
Moreover, the points $\Phi_+ (\x^{\rm ref})$ and $\Phi_-(\x^{\rm ref})$ are also separated 
by~$\gamma^{(1)}$.
Consider $w_2 (\x , [\bdeta(\beta)] ; \xSP)$ as a function of $\beta$ on $\Xi$.
One can see that this function has branching of order $\lambda$ at $\Phi_{\pm} (\x)$, 
and branching of order $-\lambda$ at $\Phi_{\pm} (\x^{\rm ref})$ (see also Lemma \ref{le:sing_w}, where this is discussed).
Thus, one can connect the corresponding points by cuts (see \figurename~\ref{f:shanin_06}),
making the function single-valued everywhere away from the cuts. This reasoning ensures that the 
integral \eqref{e:cdd005} is defined correctly.

\begin{remark}
    The continuation of the plane-wave representation obtained from the sliding contour process is unique. This is due to the fact that $G(\x;\xNP)$ is real analytic in $\x \in \S^2 \setminus \{\xNP\}$, so every plane-wave representation given in \eqref{e:cdd005} is real-analytic in its domain, and they are mutual real-analytic continuations of each other. Just as in the complex-analytic setting, real analytic continuations are uniquely determined by their values on an open domain \cite{KrantzParks2002Primer}.
\end{remark}

\begin{figure}[h]
    \centering{\includegraphics[width=0.3\textwidth]{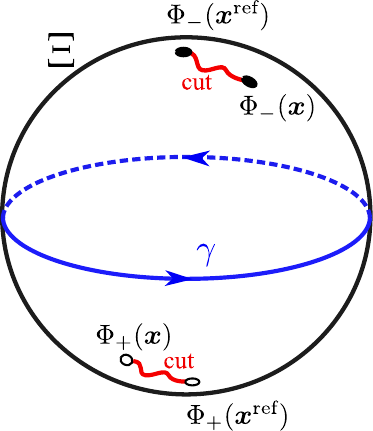}}
	\caption{Contour and singularities associated to \eqref{e:02324} for a given $\x$}
	\label{f:shanin_06}
\end{figure}
\end{remark}

\begin{remark}
In \eqref{e:02324}, we give a plane-wave representation with reference point $\x^{\text{ref}}=\xSP$ for the Green's function with point source $\x_0=\xNP$. Since $\xNP$ and $\xSP$ are antipodal, we have $\Phi(\x_0)=\Phi(\xNP)=\Phi(\xSP)=\Phi(\x^{\text{ref}})$. Therefore, the singularities of the plane-wave representation \eqref{e:02324} are fully described by the set $s(\x) = \Phi(\x) \cup \Phi(\x^{\text{ref}})$, as displayed in Figure \ref{f:shanin_06}. According to Cauchy's theorem, we may continuously deform the contour $\gamma$ of integration on $\Xi$ so long as none of these singularities are hit. Conversely, as $\x$ varies on $\S^2$, we should continuously change $\gamma$ to prevent the singular set $s(\x)$ from hitting the contour of integration. This is exactly what is done in the sliding contours procedure. 
We can extend this idea to complex $\z$, i.e.\ introduce $s(\z)$ in the same way
and study the evolution of the contour for the analytic continuation of~$G(\z ; \xNP)$. 
This will result in the integral representation for an analytical continuation of the Legendre function. If one wants to consider the re-written representation given by \eqref{e:ddd001}, for which we consider an arbitrary source $\x_0\in\S^2$ with an antipodal reference point $\x^{\text{ref}}=-\x_0$, we instead consider $s(\x) = \Phi (\x) \cup \Phi(\x_{0})$. For general plane-wave representations of the type \eqref{e:0436}, we consider $\tilde{s}(\x) = \Phi(\x) \cup \Phi(\x_0) \cup S_1 \cup S_2$ where $S_1$ and $S_2$ constitute the singularities of the forms $A_1(p)\Omega$ and $A_2(p)\Omega$ within \eqref{e:0436}, respectively. 
\end{remark}

\FloatBarrier


\section*{Conclusions}
 We considered wave fields governed by the Laplace--Beltrami equation on the real sphere $\S^2$. We discussed the complexification of this equation onto the complexified sphere $\S^2_{\c}$, and we discussed how $\S^2_{\c}$  is compactified by adding a ``sphere at infinity'' $\Xi$ onto it.  $\Xi$ has the complex manifold structure of a Riemann sphere. 
The main results of this article can be summarised as follows: 

\begin{itemize}

\item We introduced an analogue of plane waves for the Laplace--Beltrami equation considered. These plane waves, given in Definition \ref{def:planewaves}, are relatively simple (elementary) functions obeying the Laplace--Beltrami equation almost everywhere. There are two types of plane waves, and the set of all plane waves (the dispersion diagram) can be seen as the union of two copies of the sphere at infinity $\Xi$.


\item Based on this, we proposed the general formula \eqref{e:0436} for plane-wave representation of wave fields on $\S^2$. This representation involves complex contour integrals on  $\Xi$.  

\item
Furthermore, the concept of sliding contours was introduced for these plane-wave representations. This 
concept enables one to extend the domain of validity of a single plane-wave integral representation. The key feature of  the sliding contours representation is the possibility to use Cauchy's theorem on  $\Xi$ to continuously deform the integration contours.

\item 
Throughout the article, the proposed technique was illustrated by the simplest example: the Green's function of the entire sphere. This resulted in a formal way to extend the validity of the well-known integral representation given by \eqref{e:02309} and \eqref{e:02323b} from the lower hemisphere to the whole sphere save for the Green's function's logarithmic singularity. 

\end{itemize}

The concepts and results of this article pave the way for several other investigations. In particular, we plan to use them to study wave scattering problems on the real sphere (when parts of the sphere are subject to specific boundary conditions).


\vskip2pc

\bibliographystyle{apalike}
\bibliography{bibliography}


\appendix
\numberwithin{equation}{section}


\section{Some properties of the characteristic lines}
\label{append_A}

The aim of this appendix is to prove the properties a)--f) of Lemma \ref{lem:charac} summarising important points regarding characteristic lines.
Consider a complex line \VK{$L(\z';\bdeta)$} as defined in \eqref{e:bdd004}.  Since 
\[
(\z' + c \tmmathbf \eta)\cdot(\z' + c \tmmathbf \eta)
= \z' \cdot \z' + c^2 \bdeta \cdot \bdeta
+ 2c \,\bdeta \cdot \z' ,
\]
we conclude that \RCA{this line lies entirely within $\S_{\c}^2$ if and only if this quantity is equal to one for all $c$, i.e. if and only if we have} 
\begin{align}
    \tmmathbf{z}' \cdot \tmmathbf{z}' &= 1, \label{e:ap001} \\
\bdeta \cdot \bdeta &= 0, \label{e:ap002} \\
\bdeta \cdot \z' &= 0. \label{e:ap003}
\end{align}

The first equation means that $\z' \in \S^2_{\c}$, and the second equation means that 
$\bdeta \in \mathcal{N}$. 
The system (\ref{e:ap001})--(\ref{e:ap003}) is all that is needed to investigate the characteristic lines.
To find the characteristic lines passing through a finite point $\z'\in \S^2_{\c}$, 
one should solve (\ref{e:ap002}), (\ref{e:ap003}) for~$\bdeta$; 
to find the characteristic lines passing through an infinite point $[\bdeta]$, 
one should solve (\ref{e:ap001}), (\ref{e:ap003}) for~$\z'$.
In both cases, the system is underdetermined; therefore, in the first case, the solution is defined up to 
a  non-zero constant factor $c\in\mathbb{C}$, and in the second case, the solution is defined up to an additive term $\alpha \bdeta$ for some $\alpha\in\mathbb{C}$. \VK{Note that any pair $(\z';\bdeta)$ obtained from solving the system (\ref{e:ap001})--(\ref{e:ap003}) determines exactly one complex line in $\hat{\mathcal{S}}^2_{\c}$, and this line is given by $\hat{L}\left(\iota(\z'); [\bdeta]\right)$. The corresponding line $L\left(\z^{\prime};\bdeta \right)$ within $\mathcal{S}^2_{\c}$ is determined according to \eqref{e:revVK2.31}.}

Let us find $\bdeta$ for a given $\z'\in \S^2_{\c}$ parametrised by the
complex variables $(\theta, \vph)$: 
\begin{equation}
\z' (\theta, \vph) = ( \sin \theta \, \cos \vph  \, , \,
\sin \theta \, \sin \vph \, , \,
\cos \theta ) .
\label{e:ap003a}
\end{equation}
Solving (\ref{e:ap002}) and (\ref{e:ap003}), and using the parametrisation \eqref{e:bdd001}, we obtain 
\begin{equation}
	\boldsymbol{\eta}_{\pm}(\z') =  c \left(-i \cos \theta \, \cos \varphi \mp \sin \varphi
    \, , \,
    - i \cos \theta \, \sin \varphi \pm \cos \varphi
    \, , \, 
    i\sin \theta \right), 
    \qquad 
    c \in \mathbb{C} \setminus \{ 0 \}.
\label{e:ap004}
\end{equation}	
The properties a) and f) can be checked explicitly from \eqref{e:ap004}.
One can see that these points are well-defined for $\theta = 0, \pi$, and are continuous there. 
\VK{$\hat{L}_\pm (\iota(\z'))$} is the complex line in $\hat \S^2_{\c}$ passing through the points \VK{$\iota(\z')$} and
$[\bdeta_\pm (\z')]$. 


For the parametrisation \eqref{e:bdd001} for the point of $\Xi$, the formula \eqref{e:ap004}
reads as follows:
\begin{equation}
\beta = \beta_{\pm} (\theta, \vph) =  \arccos \left(    \frac{- i \cos \theta \, \cos \vph \mp \sin \vph}{\sin \theta} \right)
= 
\mp i \log ( i e^{\pm i \vph} \tan (\theta / 2)).
\label{e:ap006}
\end{equation}
If one takes real $\theta$ and $\vph$ in \eqref{e:ap006}, it yields two maps $\S^2 \to \Xi$ that coincide with the  
maps $\Phi_\pm$ defined in \eqref{e:map_Phi}:
\begin{equation}
\Phi_\pm (\x) = [\bdeta (\beta_\pm (\theta, \vph))], 
\qquad 
\x = (\theta, \vph) \in \S^2.
\label{e:ap006a}
\end{equation}

Let us take some $\bdeta(\beta)$ and solve \eqref{e:ap001}, \eqref{e:ap003} for~$\z'$.
After some algebra, we get 
\begin{equation}
\z' = \x_\pm (\bdeta)  + \alpha \bdeta,
\qquad 
\alpha \in \mathbb{C},
\label{e:ap007}
\end{equation}
where $
\x_{\pm} (\bdeta(\beta)) = 
(\theta_\pm (\beta), \vph_\pm (\beta) ),
$ with
\begin{alignat}{2}
	&	\theta_+ (\beta) =2 \arctan(|e^{i \beta}|), \quad  &&\varphi_+ (\beta)= {\rm Re}[\beta] 
    - \frac{\pi}{2}, \label{e:0229} \\
	&	\theta_- (\beta)=\pi-2 \arctan(|e^{i \beta}|), \quad &&\varphi_- (\beta)={\rm Re}[\beta] 
    + \frac{\pi}{2}.  \label{e:0230} 
\end{alignat}
The formulas \eqref{e:0229}, \eqref{e:0230} provide the properties b) and~d). 
Note that the points $\x_\pm (\bdeta)$ are chosen on the real sphere, so 
\eqref{e:0229}, \eqref{e:0230} provide two maps $\Xi \to \S^2$.
A careful check shows that these maps are smooth and coincide with the maps $\Psi_\pm$ defined in \eqref{e:map_Psi}:
\begin{equation}
\Psi_\pm ([\bdeta(\beta)])  = 
\x_{\pm} (\bdeta(\beta)).
\label{e:ap007}
\end{equation}
A direct check shows that both $\Psi_+$ and $\Psi_-$ are bijections between $\Xi$ and $\S^2$, and this provides 
property~c).

It remains to address the property~e).  Consider $\z''\in\S^2_{\c}$ and $\z\in\S^2_{\c}\cap\{\z\cdot\z''=1\}$. 
One can directly show that 
\[
\z'' \cdot (\z - \z'')  = 0, 
\qquad 
(\z - \z'') \cdot (\z - \z'') = 0. 
\]
So upon choosing $\bdeta$ as $\bz-\bz''$ we have $\bdeta\in\mathcal{N}$ and $\z=\z''+\bdeta$ is on a characteristic line passing through $\z''$, that is $\z\in L_+(\z'')\cup L_-(\z'')$. Moreover, if we assume that $\z\in L_+(\z'')\cup L_-(\z'')$, we know that there exists $\bdeta\in\mathcal{N}$ such that $\z-\z''=\bdeta$ and $\z''\cdot \bdeta=0$. This can be used directly to show that $\z\cdot\z''=1$. This the proof.

\sout{Finally, note we can extend the maps $\Phi_\pm$ and $\Psi_{\pm}$ 
to all points $\z \in \hat \S^2_{\c}$.
First, note that for $p\in \Xi$, we have $\Phi_{+}(p)=\Phi_-(p)=p$. Similarly, $\Psi_+(\x)=\Psi_-(\x)=\x$ for $\x \in \S^2$.
Since we already defined $\Phi_\pm (\S^2)$ and $\Psi_\pm (\Xi)$, we have only to define all four maps 
on $\S^2_{\c} \setminus \S^2$.

For $\x \in \S^2$, the maps $\Phi_\pm (\x)$ are given in the coordinate form by (\ref{e:ap006}). 
Note that the same formulae are valid for a complex point $\z$ defined by {\em complex\/} 
coordinates $(\theta , \varphi)$.  Thus, we define $\Phi_{\pm} (\z)$ by (\ref{e:ap006}). 

Finally, we define $\Psi_{\pm} (\z)$ by }
\sout{where the maps $\Psi_\pm$ in the right-hand sides in the coordinate form are defined by
(\ref{e:0229}), (\ref{e:0230}).}



\section{The CLBO in different coordinate systems}
\label{app:usefulformulas}
This appendix is dedicated to record the expressions of the CLBO in two different coordinate systems, which can be done through a standard change of variables procedure. 

We start with the affine complex coordinates $(z_1,z_2)$ on $\S^2_{\c}$, with the parametrisation
\begin{equation}
\z = \left( z_1, z_2, \pm \sqrt{1 - z_1^2 - z_2^2} \right).
\label{e:f004}
\end{equation}
for which we have
\begin{equation}
\tilde \Delta  = \frac{\ptl^2}{\ptl z_1^2} + \frac{\ptl^2}{\ptl z_2^2}
- \left( 
z_1^2 \frac{\ptl^2 }{\ptl z_1^2} + z_2^2 \frac{\ptl^2 }{\ptl z_2^2} + 
2 z_1 z_2 \frac{\ptl^2 }{\ptl z_1 \ptl z_2}
\right)  - 
2 \left( 
z_1 \frac{\ptl }{\ptl z_1} + z_2 \frac{\ptl }{\ptl z_2} 
\right) ,
\label{e:f008}
\end{equation}
In addition, for the coordinates \eqref{e:bdd002} used to approach $\Xi$, we have
\begin{equation}
\tilde \Delta  = (\epsilon^4 - \epsilon^2) \frac{\ptl^2 }{\ptl \epsilon^2}
+ \epsilon^3 \frac{\ptl}{\ptl \epsilon} + \epsilon^2 \frac{\ptl^2 }{\ptl \beta^2},
\label{e:f009}
\end{equation}
showing that the CLBO  is singular on~$\Xi$ (i.e.\ $\epsilon=0$), so it will only be considered on~$\S^2_{\c}$.

\section{Motivation for sliding contours: the planar Green's function}\label{app:planar}

Consider the real plane  $\mathbb{R}^2$ with coordinates $(y_1,y_2)$, bearing the Helmholtz equation 
with a point source located at the origin:
\begin{equation}
\left( 
\frac{\ptl^2}{\ptl y_1^2} +
\frac{\ptl^2}{\ptl y_2^2}
+ k^2
\right) u(y_1, y_2) = \delta(y_1) \delta(y_2).
\label{e:xdd010}
\end{equation}
Assume further that the field $u$ obeys the radiation condition (no waves should be coming from infinity). 
The detailed formulation of this condition is discussed in \cite{Babich2007} for example. Here, we assume the $e^{-i \omega t}$ convention, so we just need to  specify that the solution should behave as $e^{i k r}/\sqrt{kr}$, as $r = \sqrt{y_1^2 + y_2^2}$ tends to infinity. As is well known, $u$ can be written in terms of the Hankel function of the first kind as  $u(y_1,y_2) = -\frac{i}{4} H^{(1)}_0\left(k \sqrt{y_1^2+y_2^2}\right)$. Below, we show that it can also be written as a plane-wave representation using the sliding-contours approach.

Introduce four domains in the $(y_1,y_2)$ plane, illustrated in \figurename~\ref{f:shanin_07}, left and given by: 
\[
X^{(1)} : \, y_1 > 0, 
\qquad 
X^{(2)} : \, y_2 > 0,
\qquad
X^{(3)} : \, y_1 < 0, 
\qquad 
X^{(4)} : \, y_2 < 0.
\]

\begin{figure}[h]
    \centering{\includegraphics[width=0.9\textwidth]{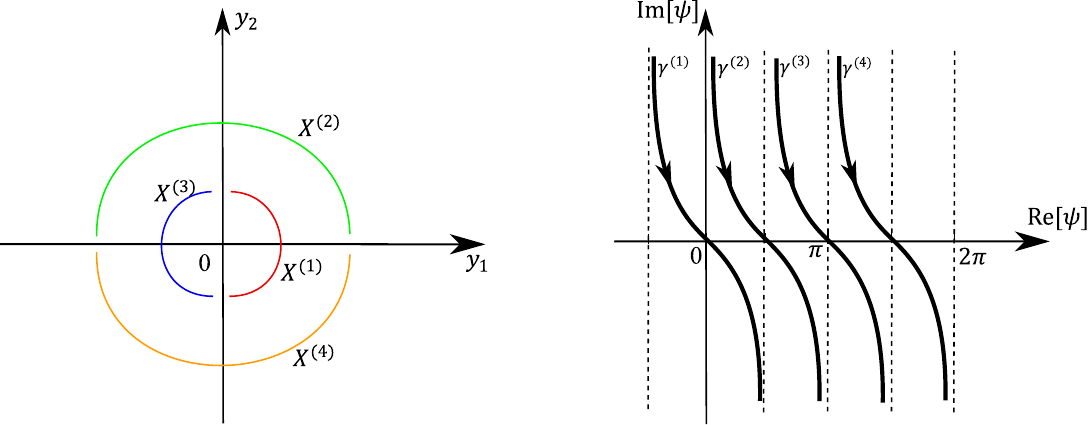}}
	\caption{Domains $X^{(m)}$ (left), and associated contours $\gamma^{(m)}$ (right)}
	\label{f:shanin_07}
\end{figure}

Using the Fourier transform and residual integration (see e.g.\ \cite[eq (B.7)]{Nethercote2020ReviewPaper}
, one can easily find the plane-wave 
representation for the field $u$: 
\begin{equation}
u(y_1 , y_2) = \frac{1}{4\pi i} \int_{\gamma^{(m)}} w(y_1, y_2 ; \psi) \, d\psi,
\qquad 
(y_1 , y_2) \in X^{(m)},
\label{e:xdd011}
\end{equation}
where 
\[
w(y_1, y_2 ; \psi) = \exp\{ i k (y_1 \cos \psi + y_2 \sin \psi) \}.
\]

Let us analyse the representation \eqref{e:xdd011}. 
The function $w$ is indeed a plane wave for this problem. The plane waves are 
indexed by a complex angular parameter $\psi$. The domain of $\psi$ is the complex plane, factorised
by the relation $\psi = \psi + 2\pi$, i.e. it is a complex circle~$\S^1_{\c}$. This set is the dispersion diagram and it is a complex manifold. 

The integrals \eqref{e:xdd011} can naturally be written in a form similar to \eqref{e:0436a}: 
\[
u (y_1, y_2) = \int_\gamma A(\psi) \, w(y_1, y_2 ; \psi) \, \Omega, 
\]
where 
$
A(\psi) = \frac{1}{4\pi i}
$
is a holomorphic spectral function on the dispersion diagram, 
and $\Omega = d\psi$ is a holomorphic 1-form on the dispersion diagram.  

One can see that the field $u(y_1 , y_2)$ cannot be described by a single integral everywhere on the plane 
(maybe minus the origin, where the source is), thus we have to cover the plane by several domains
$X^{(m)}$ and introduce the contours $\gamma^{(m)}$ for these domains. This constitutes the system of sliding contours
obeying the properties 1) -- 3) of Section~\ref{sec:slidingcontours}. 

Though not explicitly named in that way, the idea of sliding contours is discussed in the context of diffraction by wedges in for example \cite[Section 8.2]{Babich2007} and \cite[Appendix B]{Nethercote2020ReviewPaper}.

\section{Plane-wave representation for arbitrary source location, reciprocity and $\lambda$-symmetry.}
\label{app:sym}

Consider the Green's function $G(\x;\x_0)$ for an observation point $\x\in \S^2$ and a point source at $\x_0 \in \S^2$, with $\x\neq\x_0$. Take the reference point $\x^{\text{ref}}$ to be the antipode of the source $\x_0$, that is $\x^{\text{ref}}=-\x_0$. Given the rotational invariance of our problem, the formulae \eqref{e:0436}, \eqref{e:02324} and \eqref{e:cdd005} can be generalised to this case to get: 
\begin{equation}
G(\x ; \x_0)  = 
\mathcal{A}_\lambda
\int_{\gamma^{(m)}(\x_0)} w_2(\x,p;-\x_0) \Omega_{\x_0}, \quad \x\in X^{(m)}(\x_0),
\label{e:ddd001}
\end{equation}
where $\gamma^{(m)}(\x_0)$ and $X^{(m)}(\x_0)$ are sequences of contours and domains that can be obtained by simple rotation of the $\gamma^{(m)}$ and $X^{(m)}$ used in \eqref{e:cdd005}, and $w_2$ is given in Definition~\ref{def:planewaves} to be $w_2(\x,p;-\x_0)=\left(i \frac{\x \cdot \bdeta}{\x_0 \cdot \bdeta}\right)^{\lambda}$ if $p=[\bdeta]$. The constant $\mathcal{A}_\lambda$ is naturally defined by ${\mathcal{A}_\lambda=\frac{(-i)^{-\lambda}}{8 \pi \sin (\pi \lambda)}}$.
The form $\Omega_{\x_0}$ is an integration form linked to the point source $\x_0$
in the following way: 
it has simple poles at $\Phi_+ (\x_0)$ and $\Phi_- (\x_0)$, the residue at $\Phi_+ (\x_0)$
is $-i$, and the residue at $\Phi_- (\x_0)$ is~$i$. 
The contours $\gamma^{(m)}(\x_0)$ separate the points $\Phi_+ (\x)$ and $\Phi_-(\x_0)$ from the points $\Phi_-(\x)$ and $\Phi_+ (\x_0)$.
The points $\Phi_-(\x)$ and $\Phi_+ (\x_0)$ are located to the right of $\gamma^{(m)}(\x_0)$.
The formula \eqref{e:ddd001} is more universal than \eqref{e:cdd005} since it admits arbitrary 
positions of the source and observation point.  
   \VK{ \begin{remark}\label{rem:revVKE1}
        The fact that a meromorphic one-form on $\Xi$ with only simple poles is fully determined in terms of its residues follows from the theory of ``Abelian Differentials of the first kind'', as explained in \cite[Chapter 10]{MR0092855}.
    \end{remark}}
As is well-known, and clear from \eqref{e:02310},
the Green's function is reciprocal, i.e.\ :  
\begin{equation}
G(\x ; \x_0) = G(\x_0 ; \x).
\label{e:ddd006}
\end{equation}
This property is however not self-evident when $G$ is expressed using \eqref{e:ddd001}. The second part of this appendix is dedicated to show that this property can be recovered from \eqref{e:ddd001}. Due to rotational invariance, it is enough to show this for the specific case considered in the bulk of the article, that is $\x_0=\xNP$ (for which we use $\x^{\text{ref}}=-\xNP=\xSP$).

Let us consider the 1-forms $\omega_1$ and $\omega_2$ given by
\[
\omega_1 =  w_2(\x;p;\xSP) \Omega_{\xNP}, 
\qquad 
\omega_2 =  w_2(\xNP;p;-\x) \Omega_{\x},
\]
allowing us to write
\begin{alignat*}{2}
    G(\x;\xNP) &= \mathcal{A}_{\lambda} \int_{\gamma^{(m)}(\xNP)} \omega_1 \quad &&\text{ if } \x \in X^{(m)}(\xNP),\\
    G(\xNP;\x) &= \mathcal{A}_{\lambda} \int_{\gamma^{(m)}(\x)} \omega_2 \quad &&\text{ if } \xNP \in X^{(m)}(\x).
\end{alignat*}
For the form $\omega_1$, as explained in the bulk of the article we can use the parametrisation $p=[\bdeta(\beta)]$ and $\Omega_{\xNP}=d\beta$, where $\bdeta(\beta)$ is given in \eqref{e:bdd001}. Let us now consider the variable $\tau\equiv\tau_-=e^{-i\beta}$ on $\Xi$, and denote by $\tau^{\pm}(\x)$, the value of $\tau$ corresponding the the points $\Phi_{\pm}(\x)$ on $\Xi$. Introduce a change of variable $\tau \to t$ on $\Xi$ defined by
\begin{equation}
t(\tau)  = \tau^+ (\x) \frac{\tau - \tau^- (\x)}{\tau - \tau^+ (\x)}, \ \text{ with inverse } \  
\tau(t) = \tau^+(\x) \frac{t-\tau^-(\x)}{t-\tau^+(\x)}.
\label{e:ddd004}
\end{equation}
This naturally defines the map
\[
\Upsilon: \Xi \to \Xi, 
\qquad 
\tau \stackrel{\Upsilon}{\longmapsto} t (\tau). 
\]
After some algebra, using \eqref{e:ap006}, it is possible to show that 
\begin{equation}
\gamma^{(m)}(\x) = \Upsilon(\gamma^{(m)}(\xNP)), \quad \text{and} \quad \omega_1 = \Upsilon^* \omega_2.
\label{e:ddd005}
\end{equation}
The former implies that if $\x\in X^{(m)}(\xNP)$ then $\xNP\in X^{(m)}(\x)$, and the latter means that $\omega_1$ is a \textit{pullback} of $\omega_2$ under $\Upsilon$. We can therefore show that
\begin{align}
G(\x;\xNP)&=\mathcal{A}_{\lambda} \int_{\gamma^{(m)}(\xNP)} \omega_1=\mathcal{A}_{\lambda} \int_{\gamma^{(m)}(\xNP)} \Upsilon^*\omega_2 \nonumber \\ &=\mathcal{A}_{\lambda} \int_{\Upsilon(\gamma^{(m)}(\xNP))} \omega_2 = \mathcal{A}_{\lambda} \int_{\gamma^{(m)}(\x)} \omega_2=G(\xNP;\x),
\label{eq:procedure_reciprocity}
\end{align}
as required, where we have used \eqref{e:ddd005} and standard results regarding integration and change of variables, see e.g.\ \cite[Proposition 16.6, (d)]{Lee2012IntroductionSmoothManifolds}.

Note further that, using the well-known identity $P_\lambda=P_{-1-\lambda}$ \cite{Bateman1953}, and following the same procedure as the one used to prove the Lemma \ref{Lem:0441}, we would obtain
\begin{align}
	G(\x;\xNP) = \frac{(-i)^{1+\lambda}}{8 \pi \sin(\pi (-1-\lambda))}  \int_{\gamma}  w_1 (\x, [\bdeta(\beta)] ; \xSP ) 
     \, d\beta.
     \label{eq:raphE5}
\end{align}
Of interest to this appendix is the fact that through a procedure similar to that followed in \eqref{eq:procedure_reciprocity}, we could have obtained \eqref{eq:raphE5} directly from \eqref{e:cdd005}, and, doing so, we would actually recover the property $P_\lambda=P_{-1-\lambda}$ from the plane-wave representation.

\VK{Finally, let us give two more generalisations of the plane-wave representation \eqref{e:ddd001} where the locations of $\x, \x_0$ and $\x^{\mathrm{ref}}$ are all independent. 
\begin{proposition}\label{prop.D2}
  Let $\x$, $\x_0$ and $\x^{\text{ref}}$ be points on $\S^2$ such that $\x\neq\x_0$ and $\x\neq\x^{\text{ref}}$, and define $\mathcal{B}_{\lambda}=\tfrac{-i}{8 \pi \sin(\pi \lambda)} = \mathcal{B}_{-\lambda-1}$. Then, the following integral representation formulae are valid
    \begin{align}
    G (\boldsymbol{x} ; \x_0) &=\mathcal{B}_{\lambda}
    \int_{\gamma} w_2  (\boldsymbol{x}, p ; -
    \boldsymbol{x}^{\mathrm{ref}}) w_1 (\x_0, p ;
    \boldsymbol{x}^{\mathrm{ref}}) \Omega_{\boldsymbol{x}^{\mathrm{ref}}},
    \label{e:New01} \\
     G (\boldsymbol{x} ; \x_0) &= \mathcal{B}_{\lambda}
    \int_{\gamma} w_1 (\boldsymbol{x}, p ; -
    \boldsymbol{x}^{\mathrm{ref}}) w_2 (\x_0, p ;
    \boldsymbol{x}^{\mathrm{ref}}) \Omega_{\boldsymbol{x}^{\mathrm{ref}}}
    \label{e:New02},
    \end{align}
    where $\gamma$ is any contour of the form $\Phi(\Gamma)$ for some great circle $\Gamma$ on $\S^2$ separating the two points $\x_0$ and $\x^{\text{ref}}$ from the point $\x$. The contour $\gamma$ is understood to be oriented such that the points
  $\Phi_+ (\x_0)$ and $\Phi_- (\boldsymbol{x})$ lie to its left, and the points $\Phi_-
  (\x_0)$ and $\Phi_+ (\boldsymbol{x})$ lie to its  right.
  \label{lem:PWNew01}
\end{proposition}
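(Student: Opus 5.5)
The plan is to reduce \eqref{e:New01} to the already established representation \eqref{e:ddd001} (equivalently \eqref{e:cdd005} rotated) by using the multiplicative structure of the plane waves and a Cauchy deformation on $\Xi$. First I would write the integrand explicitly. For $p=[\bdeta]$,
\[
w_2(\x,p;-\x^{\rm ref})\,w_1(\x_0,p;\x^{\rm ref})
=\Big(i\tfrac{\x\cdot\bdeta}{\x^{\rm ref}\cdot\bdeta}\Big)^{\lambda}\Big(-i\tfrac{\x_0\cdot\bdeta}{\x^{\rm ref}\cdot\bdeta}\Big)^{-1-\lambda}
= c\,\Big(i\tfrac{\x\cdot\bdeta}{\x_0\cdot\bdeta}\Big)^{\lambda}\,\frac{\x^{\rm ref}\cdot\bdeta}{\x_0\cdot\bdeta},
\]
with $c$ a unimodular constant fixed by the branch normalisations \eqref{e:0224}. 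Hence the integrand equals $c\, w_2(\x,p;-\x_0)\,\tilde\Omega$, where $\tilde\Omega=\tfrac{\x^{\rm ref}\cdot\bdeta}{\x_0\cdot\bdeta}\Omega_{\x^{\rm ref}}$. The key observation is that $\tilde\Omega$ is a meromorphic $1$-form on $\Xi$ whose poles at $\Phi_\pm(\x^{\rm ref})$ are cancelled by the zeros of $\x^{\rm ref}\cdot\bdeta$, and which has simple poles exactly at $\Phi_\pm(\x_0)$. Computing its residues (working in the $\beta$ coordinate with $\x^{\rm ref}=\xNP$ after a rotation, where $\x^{\rm ref}\cdot\bdeta(\beta)=i$) and invoking Remark~\ref{rem:revVKE1}, I expect $\tilde\Omega$ to be a constant multiple of $\Omega_{\x_0}$, with the multiple combining with $c$ and $\mathcal{B}_\lambda$ to give exactly $\mathcal{A}_\lambda$.

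Once this is done, \eqref{e:New01} becomes $\mathcal{A}_\lambda\int_\gamma w_2(\x,p;-\x_0)\Omega_{\x_0}$, which is \eqref{e:ddd001} provided $\gamma$ is homologous, in $\Xi\setminus(\Phi(\x)\cup\Phi(\x_0))$, to some admissible $\gamma^{(m)}(\x_0)$. Here I would use Lemma~\ref{le:hol_u}: the integrand is holomorphic away from $\Phi(\x)\cup\Phi(\x_0)$ (note that $\Phi(\x^{\rm ref})$ is no longer singular after the cancellation, although branching of the individual factors there must cancel, which is checked via Lemma~\ref{le:sing_w}: the exponents $-\lambda$ and $\lambda+1$ plus $-1$ from the form sum to zero). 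Since $\Phi$ maps a great circle $\Gamma$ to a loop and, by the pair-to-pair correspondence, separates the images of the two hemispheres, the stated orientation condition places $\Phi_+(\x_0),\Phi_-(\x)$ on one side and $\Phi_-(\x_0),\Phi_+(\x)$ on the other, which is precisely the separation property of $\gamma^{(m)}(\x_0)$ described after \eqref{e:ddd001}. Because the complement of the four points is a sphere with punctures, any two loops with the same separation of these two pairs are homologous, so Cauchy's theorem gives equality.

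For \eqref{e:New02} I would repeat the argument with the roles of $w_1,w_2$ swapped, giving $w_1(\x,p;-\x_0)\Omega_{\x_0}$ up to constants, and then use \eqref{eq:raphE5} (the $P_\lambda=P_{-1-\lambda}$ version) together with $\mathcal{B}_\lambda=\mathcal{B}_{-\lambda-1}$. The main obstacle I anticipate is the bookkeeping of constants and branches: confirming that the product of branches fixed by \eqref{e:0224} at the reference point agrees with the branch of $w_2(\cdot,\cdot;-\x_0)$ used in \eqref{e:ddd001}, and that the residue signs of $\tilde\Omega$ match those of $\Omega_{\x_0}$ (rather than being reversed), since an orientation error would flip the sign. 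I would settle this by checking the special configuration $\x^{\rm ref}=\xSP$, $\x_0=\xNP$, where the formula must reduce directly to Lemma~\ref{Lem:0441}, and then transport by rotation invariance (Lemma~\ref{lem:charac}(f)).
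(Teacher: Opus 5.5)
Your reduction is the paper's proof run in reverse. Your $\tilde\Omega$ is $\mathscr{F}(\x_0,p;\x^{\rm ref})^{-1}\,\Omega_{\x^{\rm ref}}$, and Lemma~\ref{lem:New01} shows, by exactly the residue-plus-uniqueness argument you sketch, that $\tilde\Omega=\Omega_{\x_0}$. Proportionality is in fact automatic: on a genus-zero surface, two meromorphic 1-forms with simple poles exactly at $\Phi_\pm(\x_0)$ differ by a constant factor, so the residue computation only fixes that factor, which is $1$. Your constant is $c=e^{i\pi(\lambda+1)/2}$; it is not unimodular for complex $\lambda$, but that is harmless. This gives $c\,\mathcal{B}_\lambda=\mathcal{A}_\lambda$, and \eqref{e:New02} follows in the same way from \eqref{eq:raphE5}. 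Your special-case check also confirms the proposition's orientation. For $\x_0=\xNP$ and $\gamma=[-\pi,\pi]$ traversed with increasing ${\rm Re}\,\beta$, both $\Phi_+(\xNP)=i\infty$ and $\Phi_-(\x)$ (which has ${\rm Im}\,\beta>0$ when $\theta>\pi/2$, by \eqref{e:ap006}) lie to the left. So do not rely on the sentence after \eqref{e:ddd001}, which puts them on the right.

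The step that fails as written is the contour identification. The integrand $w_2(\x,p;-\x_0)\,\Omega_{\x_0}$ is multivalued on $\Xi\setminus(\Phi(\x)\cup\Phi(\x_0))$, with exponents $\lambda$ at $\Phi_\pm(\x)$ and $-\lambda-1$ at $\Phi_\pm(\x_0)$. It is therefore not a closed form on the punctured sphere, and ordinary homology does not control its integrals. A Pochhammer loop, for example, is null-homologous yet gives a nonzero integral. More to the point, take a loop enclosing $\Phi_+(\x)$ and $\Phi_-(\x_0)$, and its image under a Dehn twist about a curve enclosing $\Phi_\pm(\x)$. These two loops separate the four points identically and are homologous. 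However, they shrink onto different arcs from $\Phi_+(\x)$ to $\Phi_-(\x_0)$, the second winding once around the branch point $\Phi_-(\x)$, and in general the two integrals differ.

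What is true, and what the paper's sliding-contours discussion implicitly uses, is an isotopy statement. The great circles $\Gamma$ separating $\x_0$ from $\x$ have unit normals $\boldsymbol{n}$ with $\boldsymbol{n}\cdot\x_0>0>\boldsymbol{n}\cdot\x$. These normals form a lune, so the family of such circles is connected. It contains every circle allowed in the proposition. It also contains the rotated circles $RC^{(m)}$, where $R$ is a rotation taking $\xNP$ to $\x_0$, and $\gamma^{(m)}(\x_0)=\Phi(RC^{(m)})$ by Lemma~\ref{lem:charac}, f). Since $\Phi(\Gamma)=\Phi_+(\Gamma)$ meets $\Phi(\x)\cup\Phi(\x_0)$ only if $\Gamma$ passes through $\x$ or $\x_0$, moving $\Gamma$ within the lune deforms the contour continuously without crossing a branch point. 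Each intermediate loop has trivial monodromy. Continuing the branch along this deformation, Cauchy's theorem gives equality of the integrals. Replace your homology sentence by this argument and the proof is complete.
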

Note that here, we use the notation $\gamma$ instead of $\gamma^{(m)}(\x_0)$ since the sliding contours procedure did not treat $\x_0$ and $\x^{\mathrm{ref}}$ as independent. However, due to Cauchy's theorem, the sliding contours procedure may be adapted verbatim to accommodate the formulae \eqref{e:New01} and \eqref{e:New02}. Indeed, it is the sliding contours procedure which justifies the definition of $\gamma=\Phi(\Gamma)$ purely in terms of how  the great-circle $\Gamma \subset \mathcal{S}^2$ separates the points $\x, \x_0$ and $\x^{\mathrm{ref}}$.  
}

\VK{In order to prove the proposition, we require the following Lemma.
\begin{lemma}
  \label{lem:New01} For any $p=[\bdeta]\in\Xi$, and any $\x_0$ and $\x^{\mathrm{ref}}$ in $\S^2$ such that $\bdeta\cdot\x^{\mathrm{ref}} \neq 0$, introduce the function
  \begin{equation}
      \mathscr{F}(\x_0,p;\x^{\mathrm{ref}})\equiv\left( \frac{\x_0 \cdot \boldsymbol{\eta}}{\boldsymbol{x
    }^{\mathrm{ref}} \cdot \boldsymbol{\eta}} \right).
  \end{equation}
  Then the following  equality is true:
  \begin{equation}
   \mathscr{F}(\x_0,p;\x^{\mathrm{ref}}) \Omega_{\x_0}(p) =
    \Omega_{\boldsymbol{x }^{\mathrm{ref}}}(p). \label{e:New09}
  \end{equation}
\end{lemma}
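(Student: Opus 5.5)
The plan is to compare both sides of \eqref{e:New09} as meromorphic 1-forms on $\Xi$. By Remark~\ref{rem:revVKE1}, a meromorphic 1-form on the Riemann sphere with only simple poles is determined by its residues. This holds because the difference of two such forms with the same residues would be a holomorphic 1-form on $\Xi\simeq\CP^1$, and that must vanish. So it suffices to prove two things about the left-hand side $\mathscr{F}(\x_0,p;\x^{\mathrm{ref}})\Omega_{\x_0}(p)$:
\begin{itemize}
\item it has only simple poles, located at $\Phi_\pm(\x^{\mathrm{ref}})$;
\item its residues there are $-i$ at $\Phi_+(\x^{\mathrm{ref}})$ and $i$ at $\Phi_-(\x^{\mathrm{ref}})$.
\end{itemize}
These are exactly the defining data of $\Omega_{\x^{\mathrm{ref}}}$. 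First note that $\mathscr{F}$ depends only on the class $[\bdeta]$, since it is homogeneous of degree zero in $\bdeta$. Hence it is a meromorphic function on $\Xi$:
\begin{itemize}
\item its zeros are the points where $\x_0\cdot\bdeta=0$, namely $\Phi_\pm(\x_0)$;
\item its poles are the points where $\x^{\mathrm{ref}}\cdot\bdeta=0$, namely $\Phi_\pm(\x^{\mathrm{ref}})$.
\end{itemize}

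Next we check orders, using the local behaviour from Lemma~\ref{le:sing_w}, or directly a local coordinate. The key fact is that each map $p\mapsto \x\cdot\bdeta$ (suitably normalised) is a degree-2 homogeneous section on the conic $\Xi$ with two distinct simple zeros $\Phi_+(\x)\neq\Phi_-(\x)$; they are distinct for real $\x$, being "opposite" points. So $\mathscr{F}$ has simple zeros at $\Phi_\pm(\x_0)$ and simple poles at $\Phi_\pm(\x^{\mathrm{ref}})$. Multiplying by $\Omega_{\x_0}$, the simple zeros of $\mathscr{F}$ cancel the simple poles of $\Omega_{\x_0}$. The product therefore has poles at most at $\Phi_\pm(\x^{\mathrm{ref}})$, and these are simple. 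The degree is consistent: a meromorphic 1-form on $\CP^1$ has degree $-2$. If $\x_0=\pm\x^{\mathrm{ref}}$, then $\mathscr{F}$ is the constant $\pm1$, and the claim reduces to $\Omega_{-\x}=-\Omega_{\x}$. That identity is immediate from the residue description together with $\Phi_\pm(-\x)=\Phi_\mp(\x)$.

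It remains to compute the residues, and I expect this to be the main obstacle. The cleanest route is to use rotational invariance (Lemma~\ref{lem:charac}~f)) to put $\x_0=\xNP$, so that $\Omega_{\x_0}=d\beta$ and $\bdeta=\bdeta(\beta)$. Then $\mathscr{F}=i/(\x^{\mathrm{ref}}\cdot\bdeta(\beta))$. Writing $\x^{\mathrm{ref}}=(\sin\theta\cos\vph,\sin\theta\sin\vph,\cos\theta)$ gives
\[
\x^{\mathrm{ref}}\cdot\bdeta(\beta)=\sin\theta\cos(\beta-\vph)+i\cos\theta .
\]
The residue of $i\,d\beta/(\sin\theta\cos(\beta-\vph)+i\cos\theta)$ at a zero $\beta_*$ is $-i/(\sin\theta\sin(\beta_*-\vph))$. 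At the zeros, $\cos(\beta_*-\vph)=-i\cot\theta$, so $\sin(\beta_*-\vph)=\pm 1/\sin\theta$, and the two residues are $\mp i$. Since the two residues of a 1-form on a compact surface sum to zero, only the sign assignment needs care. I would match it against \eqref{e:ap006}, checking which root corresponds to $\Phi_+(\x^{\mathrm{ref}})$ via $\beta_+(\theta,\vph)$. It is safest to check one explicit case, for instance $\x^{\mathrm{ref}}$ on the equator, $\theta=\pi/2$, $\vph=0$. There the zeros are $\beta=\pm\pi/2$, and \eqref{e:ap006} gives $\beta_+=\pi/2$, where the residue is $-i$, as required. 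Continuity in $\x^{\mathrm{ref}}$ then fixes the assignment for all $\x^{\mathrm{ref}}$ not equal to $\pm\x_0$, and Remark~\ref{rem:revVKE1} concludes the proof.
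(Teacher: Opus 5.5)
Your proposal is correct and follows essentially the same route as the paper. Both arguments locate the simple poles of $\mathscr{F}\,\Omega_{\x_0}$ at $\Phi_\pm(\x^{\mathrm{ref}})$, rotate so that $\x_0=\xNP$, compute the residues $-i$ and $+i$ explicitly, and conclude by uniqueness of a meromorphic 1-form on $\Xi$ with prescribed simple-pole residues. The only differences are minor: you work in the $\beta$ coordinate with $\Omega_{\xNP}=d\beta$ and a general $\x^{\mathrm{ref}}$, whereas the paper uses the $\tau$ coordinate with $\x^{\mathrm{ref}}$ in the $x_2x_3$-plane; you also handle the degenerate case $\x^{\mathrm{ref}}=\pm\x_0$ (where $\mathscr{F}\equiv\pm1$), which the paper passes over.
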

\begin{proof}
    By definition, $\Omega_{\x_0}$ is the unique meromorphic $1$-form on
  $\Xi$ which has exactly two poles, \ located at $\Phi_+ (\x_0)$ and
  $\Phi_- (\x_0)$ with residues at these poles equal to $- i$ and $+
  i$, respectively (cf. Remark \ref{rem:revVKE1}). The function $\mathscr{F}(\x_0,p;\x^{\mathrm{ref}})$ has exactly two simple zeros at $p=\Phi_+ (\x_0)$ and $p=\Phi_-
  (\x_0)$, and two simple poles at $p=\Phi_+ (\x^{\mathrm{ref}})$ and $p=\Phi_- (\tmmathbf{x }^{\mathrm{ref}})$. Therefore, the
  expression $\mathscr{F}(\x_0,p;\x^{\mathrm{ref}}) \Omega_{\x_0}$ is a
  meromorphic 1-form on $\Xi$ with poles located at $\Phi_+ (\tmmathbf{x
  }^{\mathrm{ref}})$ and $\Phi_- (\tmmathbf{x }^{\mathrm{ref}})$. Let us compute
  the residues at these poles. Using rotational invariance of characteristics (Lemma \ref{lem:charac}, f)), we may without loss of generality choose  
    \begin{align*}
    \x_0 = (0, 0, 1) ~ \text {and } ~ \x^{\mathrm{ref}} = (0, \sin (\alpha^{\mathrm{ref}}), \cos
    (\alpha^{\mathrm{ref}})),
    \end{align*}
  for some fixed $\alpha^{\mathrm{ref}} \in [0, 2 \pi)$. On $\Xi$, choose the
  $\tau -$coordinate system according to
  \begin{equation}
    \bdeta = ((\tau^2  + 1), i (1 - \tau^2), 2 i \tau)
  \end{equation}
  so for $p=[\bdeta]$, we have
  \begin{equation}
   \mathscr{F}(\x_0,p;\x^{\mathrm{ref}}) \Omega_{\x_0}(p) =
    \frac{- 2 i}{(1 - \tau^2) \sin (\alpha^{\mathrm{ref}}) + 2 \tau \cos
    (\alpha^{\mathrm{ref}})} d \tau .
  \end{equation}
  From here, it can be explicitly computed that for any fixed
  $\alpha^{\mathrm{ref}}$, the residue at the pole $\Phi_+ (\tmmathbf{x
  }^{\mathrm{ref}})$ equals $- i$ whereas the residue at the pole $\Phi_-
  (\tmmathbf{x }^{\mathrm{ref}})$ equals $+ i$. Thus, using the fact that a meromorphic 1-form on $\Xi$ is uniquely determined in terms of its residues, we find that $\mathscr{F}(\x_0,p,\x^{\mathrm{ref}}) \Omega_{\x_0}(p) = \Omega_{\tmmathbf{x
  }^{\mathrm{ref}}}(p)$, as required. 
\end{proof}
\begin{proof}[Proof of Proposition \ref{prop.D2}]
        Using Lemma \ref{lem:New01}, the proof of the proposition is reasonably straightforward. First, computations show that, for $p = [\bdeta]$, we have 
            \begin{align}
                w_2(\x,p; -\x_0) = e^{- i \pi (\lambda + 1)/2} w_2(\x,p;-\x^{\mathrm{ref}}) w_1(\x_0,p; \x^{\mathrm{ref}}) \mathscr{F}(\x_0,p;\x^{\mathrm{ref}}).
            \end{align}
         So, using Lemma \ref{lem:New01}, we may re-write \eqref{e:ddd001} as follows. 
         \begin{align*}
            G(\x;\x_0) &= \mathcal{A}_{\lambda} \int_{\gamma} w_2(\x,p; -\x_0) \Omega_{\x_0} \\
            &=  \mathcal{A}_{\lambda} \int_{\gamma} e^{- i \pi (\lambda + 1)/2} w_2(\x,p; - \x^{\mathrm{ref}}) w_1(\x_0,p;\x^{\mathrm{ref}}) \mathscr{F}(\x_0,p;\x^{\mathrm{ref}})\Omega_{\x_0} \\
            & = \mathcal{B}_{\lambda}
    \int_{\gamma} w_2  (\boldsymbol{x}, p ; -
    \boldsymbol{x}^{\mathrm{ref}}) w_1 (\x_0, p ;
    \boldsymbol{x}^{\mathrm{ref}}) \Omega_{\boldsymbol{x}^{\mathrm{ref}}}. \numberthis
         \end{align*}
       Using \eqref{eq:raphE5}, the equality \eqref{e:New02} is proved similarly but we omit the computational details for brevity. 
    \end{proof}}

\AS{
\section{Computation of contour integrals on a complex manifold}
\label{app:int}

 The integral representation  (\ref{e:ddd001}) is the main result of the paper. Such an integral 
may seem abstract, so here our aim is to demonstrate that it can be evaluated numerically 
using a quadrature formula. Note that the theoretical definition of such an integral 
combines the definition of an integral of a differential form over a chain on a manifold (see \cite{Collier} or any other introduction to calculus on smooth manifolds) with the usual 1D complex analysis (see \cite{Shabat1}).
Throughout this appendix the complex manifold will be assumed to be one-complex-dimensional as it coincides with the situation of the paper, but similar ideas can be applied to higher-dimensional manifolds.

Consider a contour integral 
\begin{equation}
I = \int_\gamma f(p) \Omega (p), 
\label{e:apE01}
\end{equation}
where $\gamma$ is an oriented contour drawn on a 1D complex manifold~$M$. The function
$f(p)$, $p \in M$, is a holomorphic function on $M$ (at least near $\gamma$), and $\Omega (p)$ is a holomorphic 
(at least near $\gamma$) differential 1-form on~$M$. 

Note that the expression 
$f \Omega$ is slightly redundant, since $f \Omega$ is itself a holomorphic 1-form, so it is not necessary to write the function~$f$. We keep it only because it looks more 
usual. 

Cover $M$ with a set of charts $W_j$. By definition, in each chart one can introduce 
a local complex variable~$t_j$. For any non-empty intersection $W_j \cap W_m$, 
the transition functions $t_j (t_m)$ and $t_m (t_j)$ are holomorphic. 

Introduce a dense enough mesh of successive points (nodes) $p^{(1)}\dots p^{(N)}$ 
on $\gamma$. We assume that 
$\gamma$ is approximated by a union of small segments $[p^{(n)} , p^{(n+1)}]$, and the orientation 
of the contour $\gamma$ is such that it goes from $p^{(n)}$ to~$p^{(n+1)}$.
It may happen that the last node $p^{(N)}$ coincides with 
the first node $p^{(1)}$.
Assume that for each segment $[p^{(n)} , p^{(n+1)}]$ 
one can select a chart $W_j$, $j = j(n)$ to which the segment fully belongs.

Consider a segment $[p^{(n)} , p^{(n+1)}]$ in the chart $W_j$, $j = j(n)$. 
In the coordinate $t_j$, the nodes $p^{(n)} , p^{(n+1)}$ have coordinates 
$t_j^{(n)}$, $t_j^{(n+1)}$. Besides, by the definition of a holomorphic function, 
$f(p)$ on the segment can be written as a function $p_n(t_j)$ holomorphic with respect to~$t_j$.
Moreover, again by definition, $\Omega(p)$ on this segment
can be written as 
\[
\Omega = h_n(t_j) \, d t_j,
\]
where $h_n (t_j)$ is a holomorphic function of the variable~$t_j$.

An approximation of $I$ obtained by the trapezoidal quadrature rule is as follows: 
\begin{equation}
I \approx \frac{1}{2}\sum_{n = 1}^{N-1} 
\left(
f_n(t^{(n+1)}_{j(n)}) h_n (t^{(n+1)}_{j(n)}) + f_n(t^{(n)}_{j(n)}) h_n (t^{(n)}_{j(n)})
\right)
\left( 
t^{(n+1)}_{j(n)} - t^{(n)}_{j(n)}
\right)
.
\label{e:apE02}
\end{equation}
This formula can be implemented numerically. 

Note that the formula (\ref{e:02324}) is simpler than (\ref{e:apE01}), since everywhere on the 
contour one can choose a single chart described with the variable $t = \beta$.
In this case, $\Omega = d\beta$ and $h \equiv 1$. Under these simplifications, (\ref{e:apE02}) can still be used for computations. 
}

\end{document}